 \newtheorem{thm}{Theorem}
 \newtheorem{prop}{Proposition}
 \newtheorem{lem}{Lemma}
 \newtheorem{cl}{Claim}
 \newtheorem{conj}{Conjecture}
 \theoremstyle{definition}
 \newtheorem{prob}{Problem}
\newcommand{\Gama}{\varGamma}
\newcommand{\eps}{\varepsilon}
\newcommand{\imp}{\,\Rightarrow\,}
\newcommand{\bul}{$\bullet$}
\newcommand{\sbs}{\subseteq}
\newcommand{\ssm}{\smallsetminus}
\newcommand{\id}{\operatorname{id}}
\newcommand{\lra}{\leftrightarrow}
\newcommand{\co}{\colon}
\newcommand{\la}{\langle}
\newcommand{\ra}{\rangle}
\newcommand{\St}{\mathcal{S}}
\newcommand{\ap}{{\mathbf{a}}}
\newcommand{\bp}{{\mathbf{b}}}
\newcommand{\X}{X^+}
\newcommand{\Y}{Y^+}
\newcommand{\Z}{Z^+}
\newenvironment{enum}{\parindent0pt%
\begin{list}{}{%
\setlength{\itemindent}{0ex}
\setlength{\labelwidth}{15pt}
\setlength{\labelsep}{6pt}
\setlength{\leftmargin}{21pt}
\setlength{\listparindent}{0pt}
\setlength{\itemsep}{0ex}
\setlength{\topsep}{0ex}
\setlength{\parsep}{0.2em} 
}
}{\end{list}}
\begin{document}

\title[Local Embeddability of Groups into Finite IP Loops]%
{The Finite Embedding Property for IP Loops \\
and Local Embeddability of Groups \\
into Finite IP~Loops}

\author[M.~Vodi\v{c}ka and P.~Zlato\v{s}]%
{Martin Vodi\v{c}ka and Pavol Zlato\v{s}}

\address{%
{\sl Martin Vodi\v{c}ka}
\newline\indent
{\sl Max-Planck-Institut f\"ur Mathematik in den Naturwissenschaften,}
\newline\indent
{\sl Inselstrasse 22, 04103 Leipzig, Germany}
\newline\indent
{\tt vodicka@mis.mpg.de}}

\address{%
{\sl Pavol Zlato\v{s}}
\newline\indent
{\sl Faculty of Mathematics, Physics and Informatics,
Comenius University,}
\newline\indent
{\sl Mlynsk\'a dolina, 842\,48~Bratislava, Slovakia}
\newline\indent
{\tt zlatos@fmph.uniba.sk}}

\keywords{group, IP loop, finite embedding property, local embeddability}

\subjclass[2010]{Primary 20E25, 20N05; Secondary 05B07, 05B15, 05C25, 05C45}

\begin{abstract}
We prove that the class of all loops with the inverse property (IP~loops)
has the Finite Embedding Property (FEP). As a consequence, every group is
locally embeddable into finite IP loops.
\end{abstract}

\maketitle


\noindent
The \textit{Finite Embedding Property} (briefly FEP), was introduced by Henkin
\cite{Hen2} for general algebraic systems already in 1956. For groupoids
(i.e., algebraic structures $(G,\cdot)$ with a single binary operation), which
is sufficient for our purpose, it reads as follows: A class $\mathbf{K}$ of
groupoids has the FEP if for every algebra $(G,\cdot) \in \mathbf{K}$ and each
nonempty subset $X \sbs G$ there is a \textit{finite} algebra
$(H, *) \in \mathbf{K}$ extending $(X,\cdot)$, i.e., $X \sbs H$ and
$x \cdot y = x * y$ for all $x,y \in X$, such that $x \cdot y \in X$. Using this
notion an earlier result of Henkin \cite{Hen1} can be stated as follows: The class
of all abelian groups has the FEP (see also Gr\"atzer \cite{Grt}).

A more general notion of local embeddability can be traced back to even earlier
papers by Mal\!'tsev \cite{Mal1}, \cite{Mal2} (see also the posthumous monograph
\cite{Mal3}). It was explicitly (re)introduced and studied in detail mainly for
groups by Vershik and Gordon \cite{VG}: A groupoid $(G,\cdot)$ is \textit{locally
embeddable into a class of gropupoids $\mathbf{M}$} if for every $X \sbs G$
there is an $(H,*) \in \mathbf{M}$ such that $X \sbs H$ and $x \cdot y = x * y$
for all $x,y \in X$ satisfying $x \cdot y \in X$. Informally this means that every
finite cut-out from the multiplication table of $(G,\cdot)$ can be embedded into
an algebra from $\mathbf{M}$. A standard model-theoretic argument shows that this
condition is equivalent to the embeddability of $(G,\cdot)$ into an
\textit{ultraproduct} of algebras from $\mathbf{M}$ (for the ultraproduct
construction see, e.g., Chang, Keisler \cite{CK}).

Thus a class $\mathbf{K}$ has the FEP if and only if every $(G,\cdot) \in \mathbf{K}$
is locally embeddable into the class $\mathbf{K}_{\mathrm{fin}}$ of all finite members
in $\mathbf{K}$. The groups locally embeddable into (the class of all) finite groups
were called \textit{LEF groups} in \cite{VG}. The authors also noticed that, unlike the
abelian ones, not all groups are LEF, in other words, the class of all groups doesn't
have the FEP. This immediately raises the question of finding some classes of finite
grupoids into which all the groups were locally embeddable and which, at the same time,
would be ``as close to groups as possible''. The question is of interest for various
reasons: The class of all LEF groups properly extends the class of all locally residually
finite groups and plays an important role, in dynamical systems, cellular automata, etc.
(see, e.g., Ceccherini-Silberstein, Coornaert \cite{CSC}, Vershik, Gordon \cite{VG}).

Glebsky and Gordon \cite{GG1} have shown that a group is locally embeddable into finite
semigroups if and only if it is an LEF group. It follows that looking for a class of
finite groupoids into which one could locally embed all the groups one has to sacrifice
the associativity condition. They also noticed that the results about extendability of
partial latin squares to (complete) latin squares imply that every group is locally
embeddable into finite quasigroups. Refining slightly the original argument they have
shown that every group can even be locally embedded into finite loops (see also their
survey article \cite{GG2}).

A further decisive step in this direction was done by Ziman \cite{Zim}. Refining
considerably the methods of extension of partial latin squares and preserving some
symmetry conditions (see Lindner \cite{Lind}, Cruse \cite{Crs}), he has shown that
the class of all loops with \textit{antiautomorphic inverses}, i.e., loops with
two-sided inverses satisfying the identity
$$
(xy)^{-1} = y^{-1} x^{-1}
$$
(briefly \textit{AAI~loops}), has the FEP (though he didn't use this notion explicitly).
As a consequence, every group is locally embeddable into finite AAI~loops.

Quasigroups and loops experts consider the class of all AAI~loops still as a ``rather far
going extension'' of the class of all groups. On the other hand, they find the class of
all loops with the \textit{inverse property}, i.e., loops with two-sided inverses
satisfying the identities
$$
x^{-1}(xy) = y = (yx)x^{-1}
$$
(briefly IP~loops), which is a proper subclass of the class of all AAI~loops, a much
more moderate extension of the class of all groups (Dr\'apal \cite{Drap}). In the present
paper we are going to show that Ziman's result can indeed be strengthened in this sense.
Using mainly graph-theoretical methods and Steiner triple systems, we will prove that the
class of all IP~loops still has the FEP. As a consequence, every group is locally
embeddable into finite IP~loops.

For basic definitions and facts about quasigroups and loops the reader is referred
to the monographs Belousov and Belyavskaya \cite{BB} and Pflugfelder \cite{Pflug}.

\section{Formulation of the main results and plan of the proof}\label{1}

\noindent
Our main results are the following two theorems, the second one of which is obviously
an immediate consequence of the first one.

\begin{thm}\label{thm1}
The class of all IP~loops has the Finite Embedding Property.
\end{thm}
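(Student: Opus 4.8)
The plan is to convert the Finite Embedding Property into a combinatorial completion problem for a partial multiplication table and then to solve that problem by combining the known theory of extending partial Latin squares with a Steiner-triple-system construction for the auxiliary elements. First I would fix an IP~loop $(G,\cdot)$ and a finite $X \sbs G$, and pass to the finite partial operation $p$ given by $p(x,y)=x\cdot y$ whenever $x\cdot y\in X$. Enlarging $X$ by finitely many elements, I may assume that it contains the identity $e$ of $G$ and is closed under the inverse map $x\mapsto x^{-1}$; the resulting partial table is then a partial Latin square equipped with a partial identity and an involutory inverse map, and it already satisfies the inverse-property relations wherever all the entries involved lie in $X$.

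Next I would make the inverse-property constraints explicit at the level of triples. In any IP~loop the relation $xy=z$ forces $x^{-1}z=y$ and $zy^{-1}=x$, so the set of triples $(x,y,z)$ realized by the target table must be invariant under the two involutions $(x,y,z)\mapsto(x^{-1},z,y)$ and $(x,y,z)\mapsto(z,y^{-1},x)$. These generate a finite symmetry group acting on triples, an inverse-twisted analogue of the $S_3$-action by conjugate quasigroups. The combinatorial core of the theorem is thus to extend the partial table to a complete Latin square of some finite order that has $e$ as a two-sided identity, is invariant under this whole symmetry group, and restricts to $p$ on $X$.

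To perform the completion I would model the undetermined entries by a graph whose proper edge-colourings correspond to admissible fillings, in the spirit of the partial-Latin-square results of Lindner~\cite{Lind} and Cruse~\cite{Crs}, and then refine Ziman's extension procedure for AAI~loops~\cite{Zim} so that the full inverse-twisted symmetry is preserved orbit by orbit, rather than only the single antiautomorphism $(xy)^{-1}=y^{-1}x^{-1}$. Steiner triple systems should enter as a supply of a large, highly symmetric IP backbone on a set of freshly adjoined padding elements: a Steiner triple system gives a commutative IP~loop whose triples can fill the bulk of the enlarged table while automatically respecting the symmetry, leaving enough freedom to splice in the prescribed products on $X$. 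I would then check directly that the completed table is a quasigroup with identity $e$ satisfying both inverse-property identities, hence a finite IP~loop extending $(X,\cdot)$.

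The main obstacle is precisely the rigidity of the inverse property compared with the antiautomorphic-inverse property handled by Ziman. An AAI~loop imposes a single symmetry on its table, whereas IP forces the entire involutory symmetry group above, so each choice of an unknown product determines, and must be compatible with, the fillings throughout its orbit; one must guarantee that across all of these simultaneous assignments no row or column ever repeats a symbol. Arranging the graph-colouring step to respect this coupled, orbit-wise symmetry --- while retaining, thanks to the Steiner padding, enough room both to avoid conflicts and to reproduce the given partial products on $X$ --- is where I expect the real difficulty to lie; verifying the identities afterwards should be routine.
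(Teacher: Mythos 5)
Your plan correctly isolates several ingredients that the actual proof also relies on: closing $X$ under the identity and inverses, the six-fold orbit symmetry on the triples $(x,y,z)$ with $xy=z$ (this is exactly Lemma~\ref{eqcond6} of the paper), the comparison with Ziman's AAI construction, and the use of Steiner triple systems on freshly adjoined elements. But the proposal stops precisely where the proof has to begin. You defer the entire content to ``arranging the graph-colouring step to respect this coupled, orbit-wise symmetry,'' and give no indication of how that is to be done; the paper's answer to exactly this question occupies Propositions~\ref{prop2} and~\ref{prop3} and is not a refinement of the Lindner--Cruse edge-colouring machinery at all. Instead of completing the table in one shot, the paper runs an induction on the number of gaps, maintaining the invariant (4): $3\mid o_3(Q)$, $\#Q\equiv 4\pmod 6$, and all remaining gaps lying in $O_2(Q)\times O_2(Q)$. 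Even reaching that position requires a nontrivial argument: one adjoins a large set of fresh involutions, absorbs all gaps of $P$ into it, and then, for each non-involution $x$, uses Dirac's theorem to find a Hamiltonian cycle pairing the left-gaps and right-gaps at $x$ without ``crossings'' (which would force $x=x^{-1}$).

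The decisive difficulty, which your sketch does not touch, is that once every gap sits inside $O_2(Q)\times O_2(Q)$, gaps can only be filled in bundles of six by turning triangles of the gap graph into Steiner triples --- and the gap graph need not contain any triangle. The paper's way out is the triplication/reduction bookkeeping: adjoin two Steiner loops, deliberately destroy seven existing Steiner triples (creating $42$ new gaps), and refill $6(3+g)$ gaps along a six-term path containing $g$ gap-pairs, a net gain only when $g\ge 5$; whether a suitable path exists is settled by a case analysis of the components of the gap graph ($K_{2,m}$, $C_4$, $C_5$, with a preliminary restructuring step when $g=4$), which in turn rests on the parity and divisibility facts coming from Lemma~\ref{rad3} --- every vertex of the gap graph has even degree and the number of its edges is divisible by~$3$, which is where the conditions $\#Q\equiv 4\pmod 6$ and $3\mid o_3(Q)$ are actually used. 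None of these counting constraints, nor any substitute for them, appears in your proposal, so what you have is a plausible plan of attack with a genuine gap at its core rather than a proof missing only routine details.
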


\begin{thm}\label{thm2}
Every group can be locally embedded into the class of all IP~loops.
Equivalently, every group can be embedded into some ultraproduct of
finite IP~loops.
\end{thm}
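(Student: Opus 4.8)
The plan is first to reduce Theorem \ref{thm2} to Theorem \ref{thm1} and then to sketch the combinatorial completion underlying the latter. For the reduction, observe that every group is in particular an IP~loop, so by the FEP (Theorem \ref{thm1}) every group $(G,\cdot)$ is locally embeddable into the class of finite IP~loops. The equivalence with embeddability into an ultraproduct of finite IP~loops is the standard model-theoretic fact recalled in the introduction: local embeddability into a class $\mathbf{M}$ is equivalent to embeddability into an ultraproduct of members of $\mathbf{M}$ (Chang, Keisler \cite{CK}). Thus the entire content lies in Theorem \ref{thm1}, which I now address.

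To prove the FEP, start with an IP~loop $(G,\cdot)$ and a finite $X \sbs G$. Since finding a completion for a larger set yields one for any smaller set, I would first replace $X$ by a finite superset containing the identity $e$ and closed under the inverse map $x \mapsto x^{-1}$; the products that happen to land in $X$ then record a \emph{partial IP~loop} $P$ on $X$, i.e. a partial Latin square carrying the partial inverse, LIP and RIP relations inherited from $G$. The goal is to complete $P$ to a \emph{finite} IP~loop. The natural encoding is via triples: write $(a,b,c)$ for $a\cdot b = c$, so that the quasigroup condition says any two coordinates determine the third. The IP~identities act on triples by the two involutions $(a,b,c)\mapsto(a^{-1},c,b)$ (from LIP) and $(a,b,c)\mapsto(c,b^{-1},a)$ (from RIP); these generate a symmetry group isomorphic to $S_3$ (an inverse-twisted parastrophe action), and completing $P$ to an IP~loop is exactly the problem of completing the partial triple set to a full Latin square invariant under this $S_3$-action and having $e$ as identity.

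I would carry out the completion by graph-theoretic and design-theoretic means. Model the partial Latin square as a partial proper edge-colouring of the relevant complete or complete tripartite graph, so that ordinary completion corresponds to finishing the colouring, and then enforce the $S_3$-symmetry by filling cells in symmetry-orbits simultaneously. The key building blocks are Steiner triple systems: the Steiner loop on the points of an STS together with $e$ (with $x^2=e$ and $xy=z$ whenever $\{x,y,z\}$ is a block) is a commutative IP~loop of exponent~$2$, so STS data furnishes ready-made IP~structure on new, self-inverse elements. Concretely I would adjoin a large set of new involutory elements and use an STS, together with the embedding theory of partial Steiner triple systems and of partial symmetric Latin squares in the spirit of Lindner \cite{Lind} and Cruse \cite{Crs}, to define all still-undetermined products, both among the new elements and between them and $X$, in a way that respects the inverse involution and completes every row, column and symbol class.

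The main obstacle is making the Latin (quasigroup) condition and the three IP~identities hold \emph{simultaneously} in the completion. Because IP is strictly stronger than Ziman's AAI~condition, the $S_3$-symmetry forces products in whole orbits at once, and a naive cell-by-cell completion creates conflicts between the LIP-, RIP- and inverse-constraints. The heart of the argument is therefore a symmetry-preserving completion theorem: one must show that the partial, $S_3$-invariant triple set coming from $G$ can always be extended, after adjoining enough Steiner-type elements, to a full $S_3$-invariant Latin square. Establishing this — by an edge-colouring and STS-embedding argument that keeps the inverse involution, LIP and RIP in balance throughout — is where the real work lies; once it is done, reading the completed $S_3$-invariant triple set back as a multiplication table yields a finite IP~loop extending $P$, and hence $(X,\cdot)$, which proves the FEP.
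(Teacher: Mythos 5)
Your first paragraph is exactly the paper's proof of Theorem~\ref{thm2}: since every group is an IP~loop, the FEP of Theorem~\ref{thm1} gives local embeddability into finite IP~loops, and the ultraproduct reformulation is the standard model-theoretic equivalence. That part is correct and complete.

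However, since all the content of Theorem~\ref{thm2} lives in Theorem~\ref{thm1}, the rest of your proposal has to carry the weight, and there is a genuine gap there: your ``symmetry-preserving completion theorem'' --- that a partial $S_3$-invariant triple set can always be completed, after adjoining Steiner-type elements, to a full $S_3$-invariant Latin square --- is precisely the hard part, and you leave it unestablished (you say yourself that this ``is where the real work lies''). It cannot be obtained by citing the embedding theory of partial symmetric Latin squares in the style of Lindner \cite{Lind} and Cruse \cite{Crs}: that machinery respects only the inverse-transpose symmetry and is exactly what Ziman \cite{Zim} used to get the strictly weaker AAI~property; it does not enforce the full six-element orbit of the IP~identities. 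The paper's actual argument is structured quite differently and never completes a single global Latin square by edge-colouring. It works with partial IP~loops and reduces Theorem~\ref{thm1} to three propositions: first one adjusts $o_3(P)$ to be divisible by $3$ by adjoining pairs of order-three elements; then one adjoins a large set of new involutions and uses Dirac's theorem to find Hamiltonian cycles in an auxiliary graph, producing an extension in which every remaining gap lies in $O_2(Q)\times O_2(Q)$ and $\#Q\equiv 4\pmod 6$; finally one iteratively strictly decreases the number of gaps via a ``Steiner triplication'' $3P^*$ (a direct sum with two Steiner loops, suitably interlinked), a local reduction/refilling step along a six-term progression of involutions, and a case analysis of the connected components of the gap graph (triangle, path of length five, $C_4$, $C_5$, $K_{2,m}$), using the congruence $\#\Gama(P)\equiv(\#P-1)(\#P-2)-o_3(P)\pmod 6$ to rule out the degenerate case. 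None of this gap-counting and component analysis is present in your sketch, so as written the proposal does not yet constitute a proof of Theorem~\ref{thm1}, and hence not of Theorem~\ref{thm2} either.
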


We divide the proof of Theorem~\ref{thm1} into three steps consisting of the three
propositions below. Their formulation requires some notions an notation.

A \textit{partial IP~loop} $(P,\cdot)$ is a set $P$ endowed with a partial binary
operation $\cdot$ defined on a subset $D(P) \sbs P \times P$, called the
\textit{domain} of the operation $\cdot$, satisfying the following three
conditions:
\smallskip

\begin{enum}
\item[(1)]
there is an element $1 \in P$, called the \textit{unit} of $P$, such that
$(1,x), (x,1) \in D(P)$ and $1x = x1 = x$ for all $x \in P$;
\item[(2)]
for each $x \in P$ there is a unique $y \in P$, called the \textit{inverse}
of $x$ and denoted by $y = x^{-1}$, such that $(x,y), (y,x) \in D(P)$ and
$xy = yx = 1$;
\item[(3)]
for any $x,y \in P$ such that $(x,y) \in D(P)$ we have
$\bigl(x^{-1},xy\bigr), \bigl(xy,y^{-1}\bigr) \in D(P)$ and $x^{-1}(xy) = y$,
$(xy)y^{-1} = x$.
\end{enum}
\smallskip

In most cases we will denote a partial IP~loop $(P,\cdot)$ as $P$, only;
the more unambiguous notation $(P,\cdot)$ will be used just in case we need
to distinguish the operations on two or more more (partial) IP~loops.

Obviously, every subset $P \sbs L$ of an IP~loop $L$, such that $1 \in P$
and $x^{-1} \in P$ for each $x \in P$, gives rise to a partial IP~loop with
domain
$$
D(P) = \{(x,y) \in P \times P\co xy \in P\}.
$$

A partial IP~loop $(Q,*)$ is called an \textit{extension} of a partial IP~loop
$(P,\cdot)$ if $P \sbs Q$, $D(P) \sbs D(Q)$ and $x \cdot y = x * y$
for each pair $(x,y) \in D(P)$. Suppressing the signs of the operations, we write
$P \le Q$ or $Q \ge P$. Obviously, the relation $\le$ between partial IP loops
is reflexive, antisymmetric and transitive.

In the absence of associativity there is no obvious way how to define the order
of an element. Nonetheless, the sets of elements of order 2 and 3, respectively,
can still be defined for any partial IP~loop $P$:
\begin{align*}
O_2(P) &= \{x \in P\co (x,x) \in D(P),\ x \ne 1 \ \mathrm{and} \ xx = 1\},\\
O_3(P) &= \{x \in P\co (x,xx) \in D(P),\ x \ne 1 \ \mathrm{and} \ x(xx) = 1\}.
\end{align*}
In other words, for an $x \ne 1$ in $P$ we have $x \in O_2(P)$ if and only if
$x^{-1} = x$, and $x \in O_3(P)$ if and only if $x^{-1} = xx$. The number of
elements of the sets $O_2(P)$, $O_3(P)$ in a finite partial IP~loop $P$ will
be denoted by $o_2(P)$, $o_3(P)$, respectively. In general, the number of
elements of a finite set $A$ is denoted by $\#A$.

\begin{prop}\label{prop1}
Let $(P,\cdot)$ be a finite partial IP~loop. Then there exists a finite partial
IP~loop $(Q,*)$ such that $P \le Q$ and \,$3\,|\,o_3(Q)$.
\end{prop}

A pair $(x,y)$ in a partial IP~loop $P$ will be called a \textit{gap} if
$(x,y) \notin D(P)$. The set of all gaps in $P$ will be denoted by
$$
\Gama(P) = (P \times P) \ssm D(P)
          = \{(x,y) \in P \times P\co (x,y) \notin D(P)\}
$$
Obviously, both $D(P)$, $\Gama(P)$ are binary relation on the set $P$, and a
partial IP~loop $P$ is an IP~loop if and only if it contains no gaps, i.e.,
$\Gama(P) = \emptyset$.

\begin{prop}\label{prop2}
Let $P$ be a finite partial IP~loop such that \,$3\,|\,o_3(P)$. Then there
exists a finite partial IP~loop $Q$ satisfying the following four conditions:
\smallskip

\begin{enum}
\item[\rm{(4)}]\quad
$3\,|\,o_3(Q)$, \ $\#Q \ge 10$, \ $\#Q \equiv 4 \!\pmod 6$ \ and\,
\ $\Gama(Q) \sbs O_2(Q) \times O_2(Q)$.
\end{enum}
\end{prop}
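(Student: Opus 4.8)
The plan is to build $Q$ in two stages: first I would destroy every \emph{bad} gap, meaning every $(x,y) \in \Gama(P)$ in which not both $x$ and $y$ lie in $O_2(P)$, and only afterwards adjust the cardinality. The guiding principle is that, since IP~loops (and hence our partial IP~loops, wherever the relevant products occur) have antiautomorphic inverses, prescribing a single product $xy = z$ immediately forces the companion values $x^{-1}z = y$, $zy^{-1} = x$ and $z^{-1} = y^{-1}x^{-1}$. Gaps therefore come in linked orbits under the conjugate operations $(x,y) \mapsto (x^{-1}, xy)$, $(x,y)\mapsto (xy, y^{-1})$ and the inversion $w \mapsto w^{-1}$, and any filling must be carried out one whole orbit at a time and checked for internal consistency. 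I would also record at the outset the elementary fact that adjoining to a partial IP~loop a fresh element $t$ with $t = t^{-1}$, $tt = 1$ and no further products yields again a partial IP~loop in which $t \in O_2$ and $o_3$ is unchanged; these ``free involutions'' are the cheap currency for the final bookkeeping.

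For the first and main stage I would regard the multiplication of $P$ as a partially filled square indexed by $P$ and read the inverse-property identities as a symmetry of this square under the conjugate operations above. Completing the non-involution part of the table then becomes the problem of embedding a partial symmetric Latin square into a complete one while \emph{deliberately} leaving the sub-block of cells indexed by $O_2 \times O_2$ undefined. The decisive feature is that the non-identity non-involutions split into genuine inverse pairs $\{x, x^{-1}\}$ with $x \ne x^{-1}$, and this pairing supplies exactly the slack required by the classical embedding theorems for partial (symmetric) Latin squares of Lindner~\cite{Lind} and Cruse~\cite{Crs}. Applying these, I would extend $P$ to a finite partial IP~loop in which every element outside $O_2$ has a fully defined row and column, so that all surviving gaps lie in $O_2 \times O_2$. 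Here I would take care to fill each previously empty square $xx$ (for a non-involution $x$) with a value different from $x^{-1}$ and to introduce only generic inverse pairs as new elements, so that no new element of order $3$ is created and $O_3$ stays equal to $O_3(P)$; the divisibility $3 \mid o_3(P)$ furnished by Proposition~\ref{prop1} is thereby preserved, and it is moreover what permits the order-$3$ block to be permuted within itself during the symmetric completion.

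The heart of the difficulty, and the step I expect to be the main obstacle, is this first stage: because a single entry drags its entire orbit along, the partial square to be completed must first be closed under the forced moves, and along the diagonal these moves involve precisely the involutions. One must verify that the closure assigns no cell two different values and that the resulting configuration still meets the hypotheses of the symmetric embedding theorems. This is also the reason the $O_2 \times O_2$ block cannot be filled at this point: doing so would amount to completing a Steiner-type configuration on the involutions, which exists only under the congruence restrictions that the later step, and the condition $\#Q \equiv 4 \pmod 6$, are designed to meet.

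It remains to arrange the numerical conditions while keeping $\Gama(Q) \sbs O_2(Q) \times O_2(Q)$, $3 \mid o_3(Q)$ and $\#Q \ge 10$. The clean tool is the direct product with the two-element group $C_2 = \{1, t\}$. A short check shows that for any partial IP~loop $P'$ with $\Gama(P') \sbs O_2(P') \times O_2(P')$, in $P' \times C_2$ an element $(p, a)$ is an involution exactly when $p \in \{1\} \cup O_2(P')$ and $(p,a) \ne (1,1)$, that $O_3(P' \times C_2) = O_3(P') \times \{1\}$, and that every gap of $P' \times C_2$ again lies in $O_2 \times O_2$; hence this operation doubles the order, preserves $3 \mid o_3$, and preserves the gap condition. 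Either by choosing the order of the completion in the first stage within the admissible congruence class $\equiv 4 \pmod 6$, or by composing a suitable number of such doublings with the first-stage completion, I would finally bring $\#Q$ into the class $4 \pmod 6$ and above $10$, which yields the partial IP~loop $Q$ required in condition~(4).
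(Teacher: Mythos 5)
There is a genuine gap, and it sits exactly where you yourself locate ``the main obstacle.'' Your first stage delegates the completion of all rows and columns indexed by non-involutions to the embedding theorems of Lindner and Cruse for partial symmetric Latin squares. But those theorems respect only the transpose-type symmetry $(x,y,z)\mapsto(y^{-1},x^{-1},z^{-1})$, which is precisely what yields AAI~loops --- this is Ziman's result, which the paper is written to strengthen. A partial IP~loop table must be completed compatibly with the full six-element conjugate orbit of Lemma~\ref{eqcond6} (the orbit of $(x,y,z)$ under $(x,y,z)\mapsto(x^{-1},z,y)$ and $(x,y,z)\mapsto(z,y^{-1},x)$), and no off-the-shelf Latin-square embedding theorem preserves that symmetry; verifying that the ``closure under forced moves'' is consistent and still completable is not a routine check but the entire technical content of the proposition. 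The paper avoids this by a bespoke construction: it adjoins a large set $A$ of fresh involutions with $\#A\ge\max\{5\#P-1,\#\Gama(P)/2\}$ and $\#P+\#A\equiv 4\pmod 6$, fills every old gap $(x,y)\in\Gama(P)$ by a \emph{new} self-inverse element of $A$ (pairing $(x,y)$ with $(x^{-1},y^{-1})$ so that condition (5) of Lemma~\ref{itersimple} holds for free, since elements of $A$ have no products yet), and then, one inverse pair $\{x,x^{-1}\}$ of non-involutions at a time, fills the gaps $(a,x),(x,a)$ with $a\in A$ by a crossing-free bijection $\eta\co L_x\to R_x$ obtained from a Hamiltonian cycle via Dirac's theorem. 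Nothing in your proposal supplies a substitute for this step.

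The cardinality bookkeeping also does not close. Your ``free involutions'' cannot be adjoined at the end: a fresh $t$ with only $tt=1$ defined has gaps $(t,x)$ for every non-involution $x\ne 1$, destroying $\Gama(Q)\sbs O_2(Q)\times O_2(Q)$. The $C_2$-doubling does preserve all the stated invariants (your verification there is correct), but doubling acts on the residue mod~$3$ as multiplication by $2$, so if the order after stage one is divisible by $3$ no number of doublings reaches $\#Q\equiv 4\pmod 6$. That leaves only ``choosing the order of the completion in the admissible congruence class,'' which is unjustified under Lindner--Cruse but is exactly what the paper's construction provides, since $\#A$ is a free parameter chosen in advance to force $\#P+\#A\equiv 4\pmod 6$ and $\#P+\#A\ge 10$.
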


\begin{prop}\label{prop3}
Let $P$ be a finite partial IP~loop satisfying the above conditions {\rm (4)},
such that $\Gama(P) \ne \emptyset$. Then there is a finite partial
IP~loop $Q \ge P$ satisfying the conditions {\rm (4)}, as well, such
that $\#\Gama(Q) < \#\Gama(P)$.
\end{prop}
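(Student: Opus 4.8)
The plan is to treat the gaps, which by condition~(4) all lie in $O_2(P)\times O_2(P)$, as the missing edges of a graph on the vertex set $O_2(P)$, and to fill them by completing the order-$2$ part of $P$ into a genuine commutative Steiner subloop. The guiding observation is that if $a,b\in O_2(P)$ are distinct and we wish to define $ab$, then adjoining a fresh element $c$, declaring $c\in O_2(Q)$, and setting $ab=ba=c$, $bc=cb=a$, $ca=ac=b$ (together with $aa=bb=cc=1$) produces exactly one Steiner triple $\{a,b,c\}$. Since $c^{-1}=c$, the antiautomorphic inverse identity $(ab)^{-1}=b^{-1}a^{-1}$ is respected, and a direct check shows that all three defining conditions of a partial IP~loop hold \emph{inside} this triple. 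Thus filling gaps amounts to covering the missing pairs among order-$2$ elements by such triples, i.e.\ to embedding the partial Steiner triple system carried by $O_2(P)$ into a larger, more complete one.

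First I would record the bookkeeping forced by condition~(4). Only order-$2$ elements are to be adjoined, so $o_3$ is untouched and $3\,|\,o_3(Q)$ is automatic; to preserve $\#Q\equiv 4\pmod 6$ the number of new points must be $\equiv 0\pmod 6$; and $\#Q\ge\#P\ge 10$ is immediate. The congruence $\#P\equiv 4\pmod 6$ is precisely what makes the relevant orders admissible for Steiner triple systems, since it gives $\#P-1\equiv 3\pmod 6$, so a suitable system of new points and triples covering the gap-graph can be found. Arranging the covering so that at least one currently missing pair among order-$2$ elements is placed in a triple already guarantees $\#\Gama(Q)<\#\Gama(P)$, and one may in fact aim to exhaust the entire gap-graph among order-$2$ elements in a single step.

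The delicate point — and the step I expect to be the main obstacle — is that a new order-$2$ element $c$ must be multiplied not only with the order-$2$ elements but with \emph{every} remaining element of $P$; otherwise condition~(4) would be violated by a gap of the form $(c,u)$ or $(u,c)$ with $u\notin O_2(Q)$. I would therefore define the cross-products between the adjoined points and the old non-order-$2$ elements by letting each new point act on the old elements through an involutive bijection, and then arrange this whole family of bijections so that all three identities in~(3) hold across the old/new boundary as well as among the new points. Producing such a coherent family simultaneously — filling the chosen gaps while opening no gap outside $O_2(Q)\times O_2(Q)$ — is exactly where the graph-theoretic decompositions (edge-colourings and near-factorizations of the gap-graph) and the Steiner-triple-system machinery, together with the precise counting from~(4), become indispensable.

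Finally I would verify that the resulting $(Q,*)$ is a partial IP~loop extending $P$: conditions~(1) and~(2) hold because the unit $1$ and the (involutive) inverses are preserved; condition~(3) holds by the case analysis above, namely the internal consistency of each new triple and the compatibility of the boundary involutions; and $P\le Q$ holds because no previously defined product is altered. Combining this with the bookkeeping of the second paragraph yields a finite partial IP~loop $Q\ge P$ satisfying~(4) with $\#\Gama(Q)<\#\Gama(P)$, as required.
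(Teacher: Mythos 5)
Your overall instinct --- view the gaps as a graph on $O_2(P)$ and close them with Steiner triples, adjoining fresh order-$2$ points where needed --- is the right starting point, and your bookkeeping (only order-$2$ points adjoined, number of new points $\equiv 0 \pmod 6$, the relevance of $\#P-1\equiv 3\pmod 6$ to Steiner systems) matches what the paper needs. But there is a genuine gap exactly at the place you flag as ``the delicate point'': you never construct the products of a new point $c$ with the old elements, you only assert that ``a coherent family of involutive bijections'' can be arranged. This cannot be done the way you describe. Since $c$ has order $2$, left multiplication by $c$ is a fixed-point-free involution wherever it is defined (from $cu=u$ one gets $c=(cu)u^{-1}=1$), so it cannot map $P\ssm\{1\}$ into itself: that set has $\#P-1$ elements, which is odd because $\#P\equiv 4\pmod 6$. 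Hence every new point must send some old elements to new points, those new points must multiply back into $P$, and one is forced into a global block structure ($P$ against two whole families of new points) rather than a point-by-point assignment. This is precisely the paper's \emph{Steiner triplication} $3P = P\sqcup\Y\sqcup\Z$ with cross products $y_i*z_{i+k}=x_{i+2k}$, which adjoins $2(\#P-1)$ new points organized as two Steiner loops --- a far more rigid object than the covering you sketch.

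The second, and deeper, missing idea is how the gap count actually decreases. The triplication by itself fills no old gaps: $\Gama(3P^*)=\Gama(P)$. The paper then performs surgery: it deletes seven Steiner triples from $D(3P^*)$ (creating $42$ new gaps) and refills with up to nine triples threaded along a six-term progression $\ap$ of order-$2$ elements of $P$, for a net change of $6(g-4)$ gaps, where $g$ is the number of gaps among consecutive terms of $\ap$; this is profitable only when $g\ge 5$, i.e.\ when the gap graph contains a path of length five (or a triangle, which can be closed directly with no new points at all). Guaranteeing that such a configuration is available requires the structural analysis of the gap graph: all degrees are even, the number of edges is divisible by $3$, and every component contains a triangle or a path of length five unless it is $C_4$, $C_5$ or $K_{2,m}$, in which exceptional cases one first applies a triplication that rearranges the gaps to create such a path. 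None of this counting or case analysis appears in your sketch, and without it the claim $\#\Gama(Q)<\#\Gama(P)$ is unsupported: merely ``placing one missing pair in a triple'' does not suffice, because the new point's obligatory products with the rest of $P$ generate further constraints (and, in the paper's accounting, further gaps) that must themselves be paid for.
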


Theorem \ref{thm1} follows from Propositions~\ref{prop1}, \ref{prop2} and
\ref{prop3}. Indeed, given an IP~loop $L$ and a finite set $X \sbs L$
(which is not a sub-IP~loop of $L$) we can form the finite partial IP~loop
$$
P = X \cup \{1\} \cup X^{-1},
$$
where $X^{-1} = \{x^{-1}\co x \in X\}$, satisfying $X \sbs P \le L$. Then,
using Proposition~\ref{prop1}, we can find a finite partial IP~loop $Q \ge P$
such that $3\,|\,o_3(Q)$. If $\Gama(Q) = \emptyset$ then $Q$ is already a finite
IP~loop extending $P$, and we are done. Otherwise, applying Proposition~\ref{prop2},
we obtain a finite partial IP~loop $Q_1 \ge Q$ satisfying conditions (4) from
Proposition~\ref{prop2}. If $\Gama(Q_1) = \emptyset$ then we are done, again.
Otherwise, we can apply Proposition~\ref{prop3} and get a finite partial
IP~loop $Q_2 \ge Q_1$ satisfying conditions (4), as well, such that
$\#\Gama(Q_2) < \#\Gama(Q_1)$. Iterating this step finitely many times we
finally arrive at some finite partial IP~loop $Q_n$ extending $P$ such that
$\Gama(Q_n) = \emptyset$. Then $Q_n \ge P$ is a finite IP~loop we have been
looking for.

Thus it is enough to prove the Propositions~\ref{prop1}, \ref{prop2} and
\ref{prop3}. This will take place in the next four sections.

\section{Some preliminary results}\label{2}

\noindent
In this section we list the auxiliary results we will use in the proofs of
Propositions~\ref{prop1}, \ref{prop2} and~\ref{prop3}.

\begin{lem}\label{eqcond6}
Let $P$ be a partial IP~loop and $x,y,z \in P$. Then the following six
conditions are equivalent:
\smallskip

\begin{enum}
\item[\rm{(i)}]\
$(x,y) \in D(P)$ and $xy = z$;
\item[\rm{(ii)}]\
$\bigl(z,y^{-1}\bigr) \in D(P)$ and $zy^{-1} = x$;
\item[\rm{(iii)}]\
$\bigl(x^{-1},z\bigr) \in D(P)$ and $x^{-1}z = y$;
\item[\rm{(iv)}]\
$\bigl(y,z^{-1}\bigr) \in D(P)$ and $yz^{-1} = x^{-1}$;
\item[\rm{(v)}]\
$\bigl(z^{-1},x\bigr) \in D(P)$ and $z^{-1}x = y^{-1}$
\item[\rm{(vi)}]\
$\bigl(y^{-1},x^{-1}\bigr) \in D(P)$ and $y^{-1}x^{-1} = z^{-1}$;.
\end{enum}
\end{lem}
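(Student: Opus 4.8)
The plan is to deduce all six equivalences from a single ``rewriting'' rule contained in condition~(3) of the definition, together with the fact that inversion is an involution. First I note that condition~(2) forces $(x^{-1})^{-1} = x$ for every $x \in P$: the element $x$ satisfies the defining relations $x \cdot x^{-1} = x^{-1} \cdot x = 1$ of the inverse of $x^{-1}$, so by the uniqueness clause in~(2) it must equal $(x^{-1})^{-1}$. I will use this repeatedly, since in the absence of associativity one cannot simplify products by hand and must instead re-derive each relation from~(3).

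Next I record the basic engine. Condition~(3) says that whenever $(a,b) \in D(P)$ and $ab = c$, then automatically $(a^{-1}, c) \in D(P)$ with $a^{-1} c = b$ (the ``left'' consequence) and $(c, b^{-1}) \in D(P)$ with $c b^{-1} = a$ (the ``right'' consequence). Applying this with $(a,b,c) = (x,y,z)$ to statement~(i), the left consequence yields $(x^{-1}, z) \in D(P)$ and $x^{-1} z = y$, which is~(iii), and the right consequence yields $(z, y^{-1}) \in D(P)$ and $zy^{-1} = x$, which is~(ii). Treating these as new product equations and applying~(3) again, the right consequence of~(iii) gives~(iv), the left consequence of~(ii) gives~(v), and the left consequence of~(iv) gives~(vi). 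Thus~(i) implies each of the remaining five statements.

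To finish I would argue that every one of these implications is in fact reversible, so that the six conditions are pairwise equivalent. The point is that condition~(3), read backwards, reconstructs the original pair: if one applies~(3) to the pair produced by a left (resp.\ right) consequence, then---after substituting $(t^{-1})^{-1} = t$ where needed---one recovers the pair and the product one started from. Concretely, applying~(3) to the pair $(x^{-1}, z)$ of statement~(iii), its left consequence returns $(x,y) \in D(P)$ with $xy = z$, i.e.\ statement~(i); the analogous computation reverses each remaining arrow. Hence the chain of implications closes into a cycle and all six statements are equivalent.

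The only delicate point is bookkeeping: at each step one must correctly identify which ordered pair is being asserted to lie in $D(P)$ and insert the involution $(t^{-1})^{-1} = t$ in precisely the right places, because no cancellation or reassociation is permitted. There is no conceptual obstacle---the entire content of the lemma is a direct unwinding of condition~(3)---so I expect the main effort to be the careful, symmetric verification of the six cases rather than any single hard step.
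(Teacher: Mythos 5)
Your proposal is correct and follows essentially the same route as the paper: both arguments repeatedly apply the defining condition (3) together with the involutivity $(a^{-1})^{-1}=a$ (which you rightly derive from the uniqueness clause in (2)), differing only in bookkeeping --- the paper closes a single six-step cycle of implications (i)$\imp$(ii)$\imp$(v)$\imp$(vi)$\imp$(iv)$\imp$(iii)$\imp$(i), while you derive the other five from (i) and then reverse each arrow. Your individual derivations check out, so this is just a cosmetic reorganization of the same proof.
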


\begin{proof}
Applying the IP~loop property (3) in a proper way and (if necessary) using
the fact that $\bigl(a^{-1}\bigr)^{-1} = a$ for any $a \in P$, we can get the
following cycle of implications:

\smallskip
\centerline{(i)$\imp$(ii)$\imp$(v)$\imp$(vi)$\imp$(iv)$\imp$(iii)$\imp$(i).}

\smallskip
\noindent
We show just the first implication, leaving the remaining ones to the reader.
If $(x,y) \in D(P)$ and $xy = z$ then, according to (3),
$\bigl(z,y^{-1}\bigr) = \bigl(xy,y^{-1}\bigr) \in D(P)$ and
$zy^{-1} = x$.
\end{proof}

The last lemma implies that if any three of the pairs above are gaps in $P$
then so are the remaining three pairs, as well.

In the generic case all the six equivalent conditions above are different.
There are just two kinds of exceptions: first the trivial ones, when at
least one of the elements $x$, $y$, $z$ equals the unit $1$ (which never
produce gaps), and second, if $x = y \in O_3(P)$, when the six conditions
reduce to just two:

\smallskip
\begin{enum}
\item[\bul]\
$(x,x) \in D(P)$ and $xx = x^{-1}$,
\item[\bul]\
$\bigl(x^{-1},x^{-1}\bigr) \in D(P)$ and $x^{-1}x^{-1} = x$.
\end{enum}
\smallskip

From now on we will preferably use a more relaxed language: when writing $xy = z$
for elements $x$, $y$, $z$ of some partial IP~loop $P$ we will automatically
assume that $(x,y) \in D(P)$, without mentioning it explicitly.

The number of gaps in any finite IP~loop $P$ is related to the size of $P$ and that
of the set $O_3(P)$ of order three elements through a congruence modulo~6.

\begin{lem}\label{rad3}
Let $P$ be a finite partial IP~loop. Then
$$
\#\Gama(P) \equiv (\#P - 1)(\#P - 2) - o_3(P) \!\pmod 6.
$$
\end{lem}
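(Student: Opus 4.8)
The plan is to count the domain $D(P)$ rather than the gaps directly, since $\#\Gama(P) = (\#P)^2 - \#D(P)$, and then to extract $\#D(P) \pmod 6$ by means of a group action. I would encode each defined product as a triple and set $T = \{(x,y,z) \in P^3 \co xy = z\}$. The projection $(x,y,z) \mapsto (x,y)$ is a bijection between $T$ and $D(P)$ (as $z = xy$ is determined), so $\#T = \#D(P)$ and it suffices to compute $\#T \pmod 6$.

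The crux is that the six equivalent conditions of Lemma~\ref{eqcond6} are six permutations of $T$. Written as maps on triples,
$$\sigma_4\co (x,y,z) \mapsto (y, z^{-1}, x^{-1}), \qquad \sigma_6\co (x,y,z) \mapsto (y^{-1}, x^{-1}, z^{-1}),$$
together with the identity and the three further maps read off from conditions (ii), (iii) and (v), one checks $\sigma_4^3 = \sigma_6^2 = \id$, and a direct computation shows the six maps are closed under composition (a consequence of the fact that Lemma~\ref{eqcond6} exhibits these as all the equivalent descriptions of one product). Hence they form a group $G \cong S_3$ acting on $T$, with $\sigma_4$ of order $3$ and $\sigma_6$ together with two conjugates of order $2$. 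Each orbit with trivial stabiliser has size $6$ and so contributes $0 \pmod 6$; thus $\#T \pmod 6$ is governed entirely by the orbits of size $1$, $2$ and $3$.

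I would locate the exceptional orbits by a fixed-point analysis. A triple fixed by the order-three element $\sigma_4$ must satisfy $y = x$ and $z = x^{-1}$, i.e.\ $xx = x^{-1}$; the IP cancellation rules then force either $x = 1$ (the single orbit $\{(1,1,1)\}$ of size $1$) or $x \in O_3(P)$, in which case $(x,x,x^{-1})$ and $(x^{-1},x^{-1},x)$ form an orbit of size $2$. Since $x \ne x^{-1}$ on $O_3(P)$, these account for exactly $o_3(P)$ triples, i.e.\ $o_3(P)/2$ orbits of size $2$. A triple fixed by an order-two element is forced, again by cancellation, to have one coordinate equal to $1$; conversely every triple with a coordinate equal to $1$ other than $(1,1,1)$ has stabiliser of order exactly $2$ and hence lies in an orbit of size $3$. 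An inclusion--exclusion count gives $3\,\#P - 2$ such ``unit'' triples, i.e.\ one orbit of size $1$ and $\#P - 1$ orbits of size $3$.

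Collecting contributions yields $\#D(P) = \#T \equiv 1 + 3(\#P - 1) + o_3(P) \pmod 6$, whence
$$\#\Gama(P) = (\#P)^2 - \#D(P) \equiv (\#P)^2 - 3\,\#P + 2 - o_3(P) = (\#P - 1)(\#P - 2) - o_3(P) \pmod 6,$$
which is the claim. I expect the main obstacle to be the fixed-point analysis of the third paragraph: one must show that the only triples with nontrivial stabiliser are the unit triples and the $O_3(P)$ triples, and this rests on applying the IP cancellation laws (and the disjointness of $O_2(P)$ and $O_3(P)$) to exclude every other coincidence among the six associated pairs.
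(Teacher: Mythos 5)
Your proof is correct and is in substance the paper's own argument: both rest on splitting the ``generic'' part of $D(P)$ into sixtuples via the six equivalent conditions of Lemma~\ref{eqcond6}, with the degenerate cases occurring exactly at the unit pairs (contributing $3\,\#P-2$) and the pairs $(a,a)$ for $a \in O_3(P)$ (contributing $o_3(P)$). Recasting this as an $S_3$-action on triples $(x,y,z)$ with $xy=z$ and running an orbit--stabilizer and fixed-point count is a more formal packaging of the same idea, not a different route.
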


\begin{proof}
We know that $(x,1),(1,x),\bigl(x,x^{-1}\bigr) \in D(P)$ for any $x \in P$.
At the same time, $(a,a) \in D(P)$ for all $a \in O_3(P)$. Except for these pairs,
there are other $(\#P - 1)(\#P - 2) - o_3(P)$ pairs which can be either in $D(P)$
or in $\Gama(P)$. Those which are in $D(P)$ can be split into sixtuples according
to Lemma~\ref{eqcond6}, hence their number is divisible by~6, proving the above
congruence.
\end{proof}

We will also use one graph-theoretical result, namely the Dirac's theorem \cite{Dir}
giving a sufficient condition for the existence of a Hamiltonian cycle in a graph.
For our purpose, the term \textit{graph} always means an undirected graph without
loops and multiple edges. For the basic graph-theoretical concepts the reader is
referred to Diestel \cite{Diest}.

\begin{lem}\label{dirac}
Let $G$ be a graph with $n \ge 3$ vertices in which every vertex has the degree at
least $n/2$. Then $G$ has a Hamiltonian cycle.
\end{lem}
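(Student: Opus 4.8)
The plan is to argue via a longest path together with a counting (pigeonhole) argument, the classical route to Dirac's theorem. First I would let $P = v_0 v_1 \cdots v_k$ be a path of maximal length in $G$. By maximality every neighbour of the endpoint $v_0$, and likewise every neighbour of the endpoint $v_k$, must already lie on $P$: were $v_0$ adjacent to a vertex outside $P$, prepending that vertex would yield a longer path, contradicting maximality (and symmetrically for $v_k$).

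The heart of the argument is to locate an index $i$ with $1 \le i \le k$ for which both $v_0 v_i$ and $v_{i-1} v_k$ are edges. To this end I would set $A = \{i\co v_0 v_i \in E(G)\}$ and $B = \{i\co v_{i-1} v_k \in E(G)\}$, both subsets of $\{1, \dots, k\}$. Since all neighbours of $v_0$ and of $v_k$ lie on $P$, we have $\#A = \deg(v_0) \ge n/2$ and $\#B = \deg(v_k) \ge n/2$. As $A, B \sbs \{1, \dots, k\}$ with $k \le n - 1$, disjointness of $A$ and $B$ would force $\#A + \#B \le n - 1 < n \le \#A + \#B$, which is absurd; hence $A \cap B \ne \emptyset$ and the required index $i$ exists. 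This pigeonhole step is the crux, and the main subtlety lies in the bookkeeping that keeps $A$ and $B$ inside the common range $\{1,\dots,k\}$ so that the two degree bounds collide.

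Given such an $i$, I would build the cycle $C = v_0 v_1 \cdots v_{i-1} v_k v_{k-1} \cdots v_i v_0$, which visits exactly the vertices of $P$. It then remains to show that $C$ is Hamiltonian, i.e., that $P$ already contains every vertex of $G$. Here I would first observe that the hypothesis on the minimum degree forces $G$ to be connected, since any connected component must contain more than $n/2$ vertices and two such components cannot coexist within $n$ vertices. If some vertex $w$ lay off $C$, connectivity would supply an edge from a vertex outside $C$ to a vertex on $C$; splicing that edge onto a full traversal of the cycle would produce a path on $k + 2$ vertices, contradicting the maximality of $P$. Therefore $C$ passes through all $n$ vertices and is the desired Hamiltonian cycle. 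I expect this final upgrade from a long cycle to a spanning one, rather than the pigeonhole step, to be where the connectivity consequence of the degree bound must be invoked explicitly.
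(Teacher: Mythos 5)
Your proof is correct. Note, however, that the paper offers no proof of this lemma to compare against: it is Dirac's theorem, stated as Lemma~\ref{dirac} and simply cited from Dirac's 1952 paper, so the authors treat it as a known black box. Your write-up is the classical argument — take a longest path $v_0\cdots v_k$, observe that all neighbours of both endpoints lie on it, use the pigeonhole collision of the two index sets $A,B\sbs\{1,\dots,k\}$ (with $k\le n-1$ and $\#A+\#B\ge n$) to find $i$ with $v_0v_i$ and $v_{i-1}v_k$ both edges, rotate to get a cycle through all of $P$, and finally use the connectedness forced by the degree bound to show the cycle is spanning. All three steps are carried out correctly, and you rightly identify that the minimum-degree hypothesis enters twice: once in the pigeonhole count and once (via connectedness) in the upgrade to a Hamiltonian cycle. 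The only point stated slightly loosely is that connectivity "supplies an edge from a vertex outside $C$ to a vertex on $C$"; this follows by taking the last edge of a shortest path from the missed vertex $w$ to the cycle, and is standard.
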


\section{Extensions of partial IP~loops and the proof of Proposition~1}\label{3}

\noindent
All the three Propositions~\ref{prop1}, \ref{prop2} and~\ref{prop3} deal with
extensions of a partial IP~loop $(P,\cdot)$, which can be combined using two
more specific types of this construction: first, extensions preserving (the
domain of) the binary operation $\cdot$ on the original partial IP~loop $P$
and extending the base set of $P$, and, second, extensions preserving the base
set of $P$ and extending (the domain of) the binary operation on $P$. We start
with the first type of extensions.

Let $P$, $Q$ be two partial IP~loops such that $P \cap Q = \{1\}$, i.e., their
base sets have just the unit element 1 in common. Then, obviously, the set
$P \cup Q$ can be turned into a partial IP~loop, which we denote by $P \sqcup Q$,
extending both $P$ and $Q$, with domain
$$
D(P \sqcup Q) = D(P) \cup D(Q),
$$
i.e., preserving the original operations on both $P$ and $Q$, and leaving undefined
all the products $xy$, $yx$, for $x \in P \ssm \{1\}$, $y \in Q \ssm \{1\}$.
The partial IP~loop $P \sqcup Q$ is called the \textit{direct sum} of the partial
IP~loops $P$ and $Q$.

Let us fix the notation for some particular cases of this construction, considered
as extensions of the IP~loop $P$ fixed in advance. In all the particular cases below
$A$ denotes a nonempty set disjoint from $P$ such that $(A \cup \{1\},\cdot)$ is
a partial IP~loop.

Let $\sigma\co A \to A$ be an involution, i.e., $\sigma(\sigma(a)) = a$ for any
$a \in A$. Then the \textit{minimal partial IP loop} $[A,\sigma]$ has the base
set $A \cup \{1\}$ and the partial binary operation given by $ 1\cdot 1 = 1$, and
$$
1a = a1 = a, \qquad
a\sigma(a) = \sigma(a)a = 1,
$$
for any $a \in A$, leaving the operation result $ab$ undefined for any other
pair of elements $a,b \in A$. The reader is asked to realize that $[A,\sigma]$ is
indeed a partial IP~loop, and that it is minimal (concerning its domain) among all
partial IP~loops with the base set $A \cup \{1\}$, which satisfy
$$
a^{-1} = \sigma(a)
$$
for any $a \in A$. Then, obviously,
$$
O_2[A,\sigma] = \{a \in A\co \sigma(a) = a\},
$$
i.e., the order two elements in $[A,\sigma]$ coincide with the fixpoints of
the map $\sigma$. The direct sum of the partial IP~loops $P$ and $[A,\sigma]$
is denoted by
$$
P[A,\sigma] = P \sqcup [A,\sigma].
$$
The order two elements in $P[A,\sigma]$ split into two disjoint easily recognizable
parts
$$
O_2(P[A,\sigma]) = O_2(P) \cup O_2[A,\sigma].
$$
If $\sigma = \id_A\co A \to A$ is the identity on $A$, we write
$$
P[A,\id_A] = P[A],
$$
in which case
$$
O_2(P[A]) = O_2(P) \cup A.
$$
If $A = \{a_1, \dots, a_n\}$ is finite, we write
$$
P[A] = P[a_1,\dots,a_n].
$$
In particular, if $A = \{a\}$ is a singleton (and $\sigma = \id_A$ is the unique
map $A \to A$), then
$$
P[\{a\}] = P[a].
$$
If $A = \{a,a'\}$ where $a \ne a'$, and $\sigma$ is the transposition exchanging $a$
and $a'$, we denote
$$
P[A,\sigma] = P[a \lra a'].
$$

From among the second type of extensions of a partial IP~loop $P$, preserving
its base set $P$ and extending just (the domain of) its operation the simplest
ones attempt at filling in just a single gap in $P$. This type of extension will
be called a \textit{simple extension through the relation $xy = z$}. More precisely,
having $x,y,z \in P$ such that $(x,y) \in \Gama(P)$, we want to put $xy = z$.
From Lemma~\ref{eqcond6} it follows that then we have to satisfy the remaining
five relations, too. This is possible only if all the pairs $(x,y)$,
$\bigl(z,y^{-1}\bigr)$, $\bigl(x^{-1},z\bigr)$ (or, equivalently, any other three
pairs occurring there) are gaps in $P$. This is a sufficient condition, as well,
since in that case we can define all the products as required by Lemma~\ref{eqcond6}.
Thus filling in the gap $(x,y)$ enforces to fill in some other related gaps, too.
In that case we automatically assume that the remaining five relations are defined
in accord with Lemma~\ref{eqcond6}.

Iterating simple extensions through particular relations we have have to check
in each step whether any new relation $uv = w$ (and its equivalent forms) does
not interfere not only with the pairs in $D(P)$ but also with the gaps already
filled in by previous simple extensions. In other words, we are interested in
situations when we can fill in a whole set of gaps at once.

If $(P,\cdot)$ is a partial IP~loop and $*$ is a partial operation on the set $P$
with domain $T \sbs P \times P$, such that such that $T \sbs \Gama(P)$ then,
since $D(P) \cap T = \emptyset$, we can extend the original operation $\cdot$ to
the set $D(P) \cup T$ by putting $xy = x*y$ for $(x,y) \in T$. The resulting
structure will be called the \textit{extension of the IP~loop $P$ through the
operation $*$}. The next lemma tells us when such an extension gives us an IP~loop,
again. In its formulation $x^{-1}$ denotes the inverse of the element $x \in P$ with
respect to the original operation $\cdot$ in $P$.

\begin{lem}\label{itersimple}
Let $(P,\cdot)$ be a partial IP~loop and $*$ be a partial binary operation on the
set $P$ with domain $T \sbs \Gama(P)$. Then the extension of the operation $\cdot$
through the operation $*$ to the set $D(P) \cup T$ yields a partial IP~loop extending
$P$ if and only if $T$ and $*$ satisfy the following condition:

\smallskip
\begin{enum}
\item[\rm{(5)}]
for any $x,y,z \in P$, if $(x,y) \in T$ and $x*y = z$ then also
all the pairs $\bigl(z,y^{-1}\bigr)$, $\bigl(x^{-1},z\bigr)$,
$\bigl(y,z^{-1}\bigr)$, $\bigl(z^{-1},x\bigr)$, $\bigl(y^{-1},x^{-1}\bigr)$
belong to $T$ and satisfy all the relations \,$z*y^{-1} = x$, \,$x^{-1}*z = y$,
\,$y*z^{-1} = x^{-1}$, \,$z^{-1}*x = y^{-1}$, \,$y^{-1}*x^{-1} = z^{-1}$.
\end{enum}
\end{lem}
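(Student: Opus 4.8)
The statement is an equivalence, and both directions hinge on Lemma~\ref{eqcond6}: condition~(5) is nothing but the assertion that $T$ together with $*$ is closed under the six-fold symmetry of that lemma. Writing $Q$ for the structure on the base set $P$ whose domain is $D(P) \cup T$ and whose operation agrees with $\cdot$ on $D(P)$ and with $*$ on $T$ (well defined because $T \sbs \Gama(P)$ forces $D(P) \cap T = \emptyset$), the extension property $P \le Q$ is automatic as soon as $Q$ is shown to be a partial IP~loop, since $P$ and $Q$ share the base set, $D(P) \sbs D(Q)$, and the operations agree on $D(P)$. So the whole content is to decide when $Q$ satisfies axioms (1)--(3).

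For the sufficiency direction I would assume~(5) and verify (1)--(3) for $Q$. Axiom~(1) is inherited verbatim, since every product with the unit lies in $D(P)$. For axiom~(2) the same inverse map $x \mapsto x^{-1}$ works; the only thing to rule out is a new pair $(x,y) \in T$ with $x*y = 1$, and condition~(5) excludes this, for it would demand $\bigl(x^{-1},1\bigr) \in T$, contradicting $\bigl(x^{-1},1\bigr) \in D(P)$ together with $T \cap D(P) = \emptyset$. The substantive point is axiom~(3). I would split on whether the pair lies in $D(P)$ or in $T$: for pairs in $D(P)$ the two required consequences already hold in $P$ and persist; for a pair $(x,y) \in T$ with $x*y = z$, condition~(5) hands me exactly $\bigl(x^{-1},z\bigr),\bigl(z,y^{-1}\bigr) \in T$ with $x^{-1}*z = y$ and $z*y^{-1} = x$, which is precisely what~(3) requires. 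Hence $Q$ is a partial IP~loop.

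For the necessity direction I would assume $Q$ is a partial IP~loop extending $P$ and derive~(5). Fix $(x,y) \in T$ with $x*y = z$. Applying Lemma~\ref{eqcond6} inside $Q$ (where condition~(i) holds), all of (ii)--(vi) hold in $Q$, giving each of the five pairs $\bigl(z,y^{-1}\bigr),\bigl(x^{-1},z\bigr),\bigl(y,z^{-1}\bigr),\bigl(z^{-1},x\bigr),\bigl(y^{-1},x^{-1}\bigr)$ in $D(Q) = D(P) \cup T$ together with the asserted products. It remains to show each of these pairs lies in $T$ rather than in $D(P)$, which is the crux of the argument: if, say, $\bigl(z,y^{-1}\bigr)$ were in $D(P)$, then $z y^{-1} = x$ would hold in $P$, and Lemma~\ref{eqcond6} applied now inside $P$ (via (ii)$\imp$(i)) would force $(x,y) \in D(P)$, contradicting $(x,y) \in T \sbs \Gama(P)$. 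The same contrapositive argument disposes of the other four pairs, so all five lie in $T$ with the required values of $*$, which is condition~(5).

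The main obstacle is exactly this last localization step in the necessity direction --- ensuring the five derived pairs are genuine gaps of $P$ (hence elements of $T$) and not pairs already present in $D(P)$. Everything else is bookkeeping with the axioms; the only subtlety worth flagging is to keep the two invocations of Lemma~\ref{eqcond6} separate, one in the large structure $Q$ to produce the relations and one in $P$ to relocate them into $T$, and to handle the degenerate unit pairs when checking axiom~(2).
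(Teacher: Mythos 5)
Your proof is correct and takes essentially the same route as the paper's: both directions reduce to Lemma~\ref{eqcond6}, with sufficiency amounting to the fact that each relation $x*y=z$ can be added consistently because $T\sbs\Gama(P)$ prevents interference with $D(P)$ and with the other added relations, and necessity to relocating the five derived pairs from $D(Q)$ into $T$. The paper's own proof is a two-line sketch of exactly this argument; your version merely spells out the axiom checks and the localization step in full.
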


\begin{proof}
In view of Lemma~\ref{eqcond6}, condition (5) obviously is necessary. By the same
reason, condition (5) implies that each of the particular relations $xy = x*y$, for
$(x,y) \in T$, can be separately added to $P$. Since $T \sbs \Gama(P)$, no particular
relation $xy = x*y$ can interfere with the remaining added relations $uv = u*v$.
\end{proof}

The following simple combination of both the types of extensions will be used in
the proof of Proposition~\ref{prop1}.

Let $A$ be a set (disjoint from $P$) and $\sigma\co A \to A$ be a fixpointfree
involution (i.e., $\sigma(a) \ne a$ for every $a \in A$). Then $[A,\sigma]_3$
denotes the extension of the minimal partial IP~loop $[A,\sigma]$ through
(just) the additional relations
$$
aa = \sigma(a)
$$
for any $a \in A$. Formally, $[A,\sigma]_3$ is the extension of $[A,\sigma]$
through the operation $*$ defined on the set $T = \{(a,a)\co a \in A\}$ by
$a*a = \sigma(a)$ for any $a \in A$. It is clear that each pair $(a,a)$ is
indeed a gap in $[A,\sigma]$ and that the condition (5) from Lemma~\ref{itersimple}
is satisfied. Hence $[A,\sigma]_3$ is a partial IP~loop extending $[A,\sigma]$
in which
$$
a^{-1} = \sigma(a) = aa
$$
for each $a \in A$, i.e., every element $a \in A$ has the order three.
For the direct sum
$$
P[A,\sigma]_3 = P \sqcup [A,\sigma]_3
$$
we have
$$
O_3(P[A,\sigma]_3) = O_3(P) \cup A.
$$
If $A = \{a,a'\}$, where $a \ne a'$, then the denotations $[A, a \lra a']_3$
and
$$
P[a \lra a']_3 = P[A, a \lra a']_3
$$
are already self-explanatory, and similarly for
$[A, a \lra a', b \lra b']_3$ and
$$
P[a \lra a', b \lra b']_3 = P[A, a \lra a', b \lra b']_3
$$
where the set $A$ consists of four distinct elements $a$, $a'$, $b$, $b'$.

\begin{proof}[Proof of Proposition 1]
Let $a$, $a'$, $b$, $b'$ be four distinct elements not belonging to $P$.
Let us form the extensions $Q = P[a \lra a']_3$ and
$R = P[a \lra a', b \lra b']_3$. Obviously,
$$
o_3(Q) = o_3(P) + 2 \quad\text{and}\quad o_3(R) = o_3(P) + 4.
$$
Since one of the numbers $o_3(P)$, $o_3(P)+2$, $o_3(P)+4$ is divisible by~3,
one of the partial IP~loops $P$, $Q$, $R$ has the desired property.
\end{proof}

\section{The proof of Proposition~2}\label{4}

\noindent
A more subtle combination of the two types of extensions introduced in
Section~\ref{3} will be required in the proof of Proposition~\ref{prop2}.

\begin{proof}[Proof of Proposition 2]
Let $P$ be a finite partial IP~loop such that $3\,|\,o_3(P)$, and $A$ be a finite
set disjoint from $P$ with the number of its elements satisfying
$$
\#A \ge \max\bigl\{5(\#P) - 1, \#\Gama(P)/2\} \quad \text{and} \quad
10\le \#P + \#A \equiv 4 \!\pmod 6.
$$
First we construct the minimal extension $P[A]$, in which every element $a$ of $A$
has the order two, while
$$
O_3(P[A]) = O_3(P).
$$
Hence the partial IP~loop $P[A]$ has the base set $P \cup A$ with the required
number of elements and the same number of elements of the order three as $P$.

Next, we construct an extension of $P[A]$ in which all the original gaps
in $\Gama(P)$ will be filled. We take $T = \Gama(P) \sbs \Gama(P[A])$ and
introduce a binary operation $*$ on $T$, assigning to each pair of gaps
$(x,y), \bigl(x^{-1},y^{-1}\bigr) \in T$ a (self-inverse) element
$$
x*y = y^{-1}*x^{-1} = (x*y)^{-1}
$$
from $A$. At the same time we arrange that (with the above exception) $x*y \ne u*v$
whenever $(x,y)$ and $(u,v)$ are different gaps in $P$. This is possible, as
$\#A \ge \Gama(P)/2$. Since $(1,a)$, $(a,1)$ and $(a,a)$, where $a \in A$, are
the only gaps in $P[A]$ containing some element of $A$, condition (5) of
Lemma~\ref{itersimple} is obviously satisfied. Thus we can construct the partial
IP~loop $P[A]^*$, extending $P[A]$ through the operation $*$. It still has the base
set $P \cup A$, while
$$
\Gama\bigl(P[A]^*\bigr) \cap (P \times P) = \emptyset.
$$
At the same time, $ab \in P$ for any $(a,b) \in D(P[A]^*) \cap (A \times A)$.

Finally, we construct an extension $Q$ of $P[A]^*$ with the same base set
$P \cup A$, such that
$$
\Gama(Q) \sbs O_2(Q) \times O_2(Q).
$$
As all the elements of $A$ are of order two,
and $P[A]^*$ has no gap $(x,y) \in P \times P$, it suffices to manage that
$(x,a), (a,x) \in D(Q)$ for all $a \in A$, $x \in P \ssm O_2(P)$, $x \ne 1$.

We will proceed by an induction argument. To this end we represent the set
$$
P \ssm \bigl(O_2(P) \cup \{1\}\bigr) =
   \bigl\{x_1, x_1^{-1}, \dots, x_n, x_n^{-1}\bigr\},
$$
in such a way that each pair of mutually inverse elements
$x, x^{-1} \in P \ssm \bigl(O_2(P) \cup \{1\}\bigr)$ occurs in this list exactly once.
To start with we put $Q_0 = P[A]^*$. Now we assume that, for some $0 \le k < n$, we
already have an IP~loop $Q_k \ge P[A]^*$ with the same base set $P \cup A$, satisfying
the following three conditions:

\smallskip
\begin{enum}
\item[(6)]
$au, va \in A$ for any $a \in A$, $u,v \in P \ssm \{1\}$ such that
$(a,u), (v,a) \in D(Q_k)$
\item[(7)]
$ab \in P$ for any $(a,b) \in D(Q_k) \cap (A \times A)$, and
\item[(8)]
$(x_l,a), (a,x_l) \in D(Q_k)$ for all $0 \le l \le k$, $a \in A$,
\end{enum}

\smallskip
\noindent
Observe that $Q_0$ trivially satisfies all these conditions (with $k = 0$), and condition
(8) jointly with Lemma~\ref{eqcond6} imply that
$\bigl(x_l^{-1},a\bigr), \bigl(a,x_l^{-1}\bigr) \in D(Q_k)$ for all $0 \le l \le k$,
$a \in A$, too. For $x = x_{k+1}$, we have to fill in all the gaps in $Q_k$ in which $x$
occurs, preserving all the conditions (6), (7), (8) with $k$ replaced by $k+1$. That way
all the gaps in $Q_k$ containing $x^{-1}$ will be filled in, as well.

Let us introduce the sets
$$
L_x = \{a \in A\co (a,x) \in \Gama(Q_k)\}
\quad \text{and} \quad
R_x = \{a \in A\co (x,a) \in \Gama(Q_k)\}.
$$

\begin{cl}\label{claim1}
We have $\#L_x = \#R_x$.
\end{cl}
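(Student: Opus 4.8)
The plan is to establish the equality $\#L_x = \#R_x$ by exhibiting an explicit bijection between $L_x$ and $R_x$, built from the symmetries catalogued in Lemma~\ref{eqcond6}. The natural candidate is the map sending $a \mapsto a$ composed with passage to inverses, but since $x \ne x^{-1}$ (as $x = x_{k+1} \notin O_2(P)$) and every $a \in A$ satisfies $a = a^{-1}$, I would look at how the six equivalent conditions of Lemma~\ref{eqcond6} act on a single gap involving $x$ and an element $a \in A$.

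First I would observe that, for a fixed $a \in A$, the pair $(a,x)$ being a gap means (by Lemma~\ref{eqcond6}, reading off the equivalent forms) that the related pairs are gaps too. Using $a^{-1} = a$ and writing $z$ for whatever the (as yet undefined) product would be, the relevant equivalent conditions transform $(a,x)$ into pairs of the form $(x,\,\cdot\,)$ and $(x^{-1},\,\cdot\,)$. The key point is to track which of these equivalent forms has the shape $(x,b)$ with $b \in A$: condition~(v), $\bigl(z^{-1},x\bigr) \in D(P)$, and its partner conditions relate a left-gap $(a,x)$ to a right-gap $(x,b)$ for a suitable $b \in A$. I would therefore define $\varphi\co L_x \to R_x$ by using the chain of equivalences in Lemma~\ref{eqcond6} to send a left-gap $(a,x)$ to the corresponding right-gap $(x,b)$, and check that $b \in A$.

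The argument that $b \in A$ is where the inductive hypotheses~(6) and~(7) do the work, and this is the step I expect to be the main obstacle. The concern is that the equivalent form of a gap $(a,x)$ with $a \in A$ might a~priori involve an element outside $A$; I would rule this out by noting that if any of the forcing products \emph{were} defined, conditions~(6) and~(7) constrain where its value lies. Concretely, since $a \in A$ has order two and $x \notin O_2(P)$, the only way the transformed pair fails to sit inside $A \times (P \cup A)$ in the right slot is if it already lay in $D(Q_k)$, contradicting that we started with a genuine gap; so the equivalence respects membership in $A$, and $\varphi$ lands in $R_x$. I would then verify $\varphi$ is injective (distinct gaps give distinct images, because the Lemma~\ref{eqcond6} correspondence is a bijection on pairs) and surjective by running the same equivalences backwards, sending a right-gap $(x,b)$ to the left-gap $(a,x)$ it forces. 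Once $\varphi$ is shown to be a well-defined bijection $L_x \to R_x$, the claim $\#L_x = \#R_x$ follows immediately.

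Should a clean single map prove awkward because of how the six conditions collapse, the fallback is a counting argument: I would count, in $A \times \{x\} \cup \{x\} \times A$, the pairs that are gaps, partition them into the orbits induced by Lemma~\ref{eqcond6}, and argue that each orbit contributes equally to the left-count and the right-count, again using $a = a^{-1}$ for $a \in A$ together with $x \ne x^{-1}$. Either way the essential input is the six-fold symmetry of Lemma~\ref{eqcond6} specialized to the situation where one argument is an involution from $A$ and the other is an order-$\ge 4$ element $x$, together with the bookkeeping guaranteed by the inductive conditions~(6) and~(7).
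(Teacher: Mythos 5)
Your central construction does not get off the ground. You propose to send a gap $(a,x) \in \Gama(Q_k)$ to ``the corresponding right-gap $(x,b)$'' obtained by running the six equivalent conditions of Lemma~\ref{eqcond6}, ``writing $z$ for whatever the (as yet undefined) product would be.'' But Lemma~\ref{eqcond6} is a statement about a \emph{triple} $(x,y,z)$ for which the product is actually defined; a gap carries no value $z$, and without a specific $z$ the lemma does not associate to $(a,x)$ any particular partner pair $(x,b)$ --- there are many candidate elements $z$ for which the six pairs are all gaps, and each would yield a different $b$. So $\varphi$ is not well-defined, and the worry you flag (whether $b$ lands in $A$) is not the real obstacle. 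The same defect sinks your fallback: gaps do not partition into ``orbits induced by Lemma~\ref{eqcond6},'' precisely because the orbit of a pair is only determined once the product is. Indeed, the remainder of the paper's argument implicitly confirms that no canonical bijection $L_x \to R_x$ exists: one must \emph{choose} a suitable bijection $\eta$ with extra properties, via Dirac's theorem.

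The fix is to count the complements rather than the gap sets. For $u$ with $(x,u) \in D(Q_k)$ and $xu = v$, Lemma~\ref{eqcond6} gives $v^{-1}x = u^{-1}$, so $u \mapsto (xu)^{-1}$ is an injection (hence, comparing both directions, a bijection) from $\{u \in P \cup A\co (x,u) \in D(Q_k)\}$ onto $\{w \in P \cup A\co (w,x) \in D(Q_k)\}$ --- here the products exist, so the lemma genuinely applies. Since $\Gama(Q_k) \cap (P \times P) = \emptyset$ and $(x,1),(1,x),(x,x^{-1}),(x^{-1},x) \in D(Q_k)$, every gap involving $x$ has its other coordinate in $A$, so these two sets are exactly the complements of $R_x$ and $L_x$ in the finite set $P \cup A$; equal complements give $\#L_x = \#R_x$. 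This is the paper's argument, and it uses neither condition~(6) nor~(7).
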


\begin{proof}
Since $xu=v$ implies $v^{-1}x = u^{-1}$ for any $u,v \in P \cup A$, we have
a bijection between the sets
\begin{align*}
(P \cup A) \ssm L_x &= \{u \in P \cup A\co (x,u) \in D(Q_k)\}, \\
(P \cup A) \ssm R_x &= \bigl\{v \in P \cup A\co \bigl(v^{-1},x\bigr) \in D(Q_k)\bigr\},
\end{align*}
which implies that the sets $\#L_x$ and $\#R_x$ have the same number of elements.
\end{proof}

Thus there exists a bijective map $\eta\co L_x \to R_x$ (with inverse map
$\eta^{-1}\co R_x \to L_x$); latter on we will specify some additional requirements
concerning it. We intend to use $\eta$ in defining the extending operation $*$ on
the set
$$
T_x = \bigl(L_x \times \bigl\{x,x^{-1}\bigr\}\bigr) \cup
      \bigl(\bigl\{x,x^{-1}\bigr\} \times R_x\bigr) \cup
      \bigl\{(a,\eta(a))\co a \in L_x\bigr\} \cup \bigl\{(\eta(a),a)\co a \in L_x\bigr\}
$$
by putting
$$
a*x = \eta(a)
$$
for any $a \in L_x$. Then we have to satisfy the remaining five conditions of
Lemma~\ref{itersimple}, i.e. (remembering that the elements of $A$ are self-inverse),
$$
a*\eta(a) = x, \quad x^{-1}*a = \eta(a), \quad \eta(a)*a = x^{-1}, \quad
   \eta(a)*x^{-1} = x*\eta(a) = a.
$$
The substitution $b = \eta(a)$ into the last two relations yields
$$
b*x^{-1} = x*b = \eta^{-1}(b)
$$
for any $b \in R_x$. It follows that each pair $(a,\eta(a))$, where $a \in L_x$, must
be a gap in $Q_k$. Since $(a,a) \in D(Q_k)$ for all $a \in A$, this implies that
$\eta(a) \ne a$ for $a \in L_x \cap R_x$ (if any). Additionally, $\eta$ must avoid any
``crossing'', i.e., the situation that
$$
\eta(a) = b \quad\text{and}\quad \eta(b) = a
$$
for some distinct $a,b \in L_x \cap R_x$. This namely, according to Lemma~\ref{eqcond6},
would imply that $a*b = x = b*a$, and, since $(a*b)^{-1} =b*a$, produce a contradiction
$x = x^{-1}$. Now, it is clear, that the partial IP~loop $Q_{k+1}$, to be obtained as the
extension of $Q_k$ through the operation $*$ constructed from the bijection $\eta$ as
described, will satisfy all the conditions (6), (7), (8) (with $k+1$ in place of $k$).
Thus it is enough to show that there is indeed a ``crossing avoiding'' bijection
$\eta\co L_x \to R_x$ such that
$$
(a,\eta(a)) \in \Gama(Q_k)
$$
for each $a \in L_x$. To this end we denote the common value $\#L_x = \#R_x$ by $m$,
enumerate the sets
$$
L_x = \{a_1,\dots,a_m\}, \qquad R_x = \{b_1,\dots,b_m\}
$$
in such a way that $i = j$ whenever $a_i = b_j \in L_x \cap R_x$, and introduce the graph
$G_x$ on the vertex set $V = \{1,\dots,m\}$, joining two vertices $i$, $j$ by an edge if
and only if $i \ne j$ and both $(a_i,b_j), (a_j,b_i) \in \Gama(Q_k)$.

\begin{cl}\label{claim2}
The graph $G_x$ has a Hamiltonian cycle.
\end{cl}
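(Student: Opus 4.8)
The plan is to verify the two hypotheses of Dirac's theorem (Lemma~\ref{dirac}) for $G_x$: that it has $m \ge 3$ vertices and that each vertex has degree at least $m/2$. Throughout write $x = x_{k+1}$ and recall from Claim~\ref{claim1} that $m = \#L_x = \#R_x$.

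First I would determine $m$ closely enough. Since $L_x = \{a \in A\co (a,x) \in \Gama(Q_k)\}$, one has $m = \#A - \#\{a \in A\co (a,x) \in D(Q_k)\}$, so it suffices to count the $a \in A$ for which $(a,x)$ is \emph{not} a gap. The first point to note is that no such product is created while the gaps attached to $x_1,\dots,x_k$ are being filled: every pair filled in the step for $x_l$ has its right entry in $\{x_l,x_l^{-1}\} \cup A$, and $x \ne x_l^{\pm 1}$ for $l \le k$; hence $\{a\co (a,x) \in D(Q_k)\} = \{a\co (a,x) \in D(Q_0)\}$. In $Q_0 = P[A]^*$ the left products of an $a \in A$ with elements of $P$ come solely from the single gap of $P$ that $a$ fills, computed through the six relations of Lemma~\ref{eqcond6}. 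Exploiting the pairing $(p,q) \lra (q^{-1},p^{-1})$ of the gaps of $P$ (a gap and its partner receive the same self-inverse value in $A$), one checks that $\{a\co (a,x) \in D(Q_0)\}$ is in bijection with the set of gaps of $P$ having left coordinate $x$. Since $(x,1)$ and $(x,x^{-1})$ are never gaps, there are at most $\#P - 2$ of the latter, so
$$
m = \#A - \#\{a \in A\co (a,x) \in D(Q_k)\} \ge \#A - (\#P - 2) \ge 4\,\#P + 1,
$$
using $\#A \ge 5\,\#P - 1$; in particular $m \ge 3$.

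Next I would bound the non-neighbours of a fixed vertex $i$. A vertex $j \ne i$ can fail to be adjacent to $i$ only if $(a_i,b_j) \in D(Q_k)$ or $(a_j,b_i) \in D(Q_k)$, and both are products of two elements of $A$. In $Q_0$ the only $A \times A$ products are the diagonal ones $aa = 1$; every other $A \times A$ product is one of the matching products $a\,\eta(a)$, $\eta(a)\,a$ added in one of the $k$ earlier steps, so a fixed $a_i$ is a left factor of at most $2k$ non-diagonal $A \times A$ products. Moreover the diagonal product $(a_i,a_i)$ is relevant only when $a_i \in R_x$, in which case $a_i = b_i$ and it contributes solely the excluded index $j = i$. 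Hence there are at most $2k$ indices $j \ne i$ with $(a_i,b_j) \in D(Q_k)$, and by the same argument at most $2k$ with $(a_j,b_i) \in D(Q_k)$, so $i$ has at most $4k$ non-neighbours and therefore degree at least $m - 1 - 4k$.

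Finally I would combine the estimates. Because $P \ssm (O_2(P) \cup \{1\}) = \{x_1,x_1^{-1},\dots,x_n,x_n^{-1}\}$ has $2n \le \#P - 1$ elements and $k < n$, we get $8k \le 4\,\#P - 12$, so the degree bound $m - 1 - 4k \ge m/2$ reduces to $m \ge 2 + 8k$, which holds since $m \ge 4\,\#P + 1 > 4\,\#P - 10 \ge 2 + 8k$. Dirac's theorem then supplies the Hamiltonian cycle. I expect the delicate step to be the lower bound on $m$: the naive count of forbidden $a$'s only gives $m \gtrsim 3\,\#P$, which is too weak for the degree condition, and it is exactly the gap-pairing $(p,q) \lra (q^{-1},p^{-1})$ that halves this count and furnishes the slack needed to reach $4\,\#P + 1$.
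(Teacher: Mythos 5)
Your proof is correct and has the same skeleton as the paper's --- verify both hypotheses of Dirac's theorem --- but the two estimates are carried out differently enough to be worth comparing. For the bound on $m$, both arguments rest on the fact that every $a \in A$ with $(a,x) \in D(Q_k)$ has $ax \in P \ssm \{1\}$; your explicit check that no pair $(a,x_{k+1})$ is created during the first $k$ steps, so that all such products were already present in $Q_0 = P[A]^*$, makes this point fully rigorous (the paper leans on its invariant (6), whose printed conclusion, membership in $A$, is evidently a typo for membership in $P$). Note, however, that once $ax \in P \ssm \{1\}$ is known, injectivity of right translation by $x$ already caps the number of such $a$ at $\#P - 1$, so the gap-pairing refinement you single out as the delicate step is not actually needed: $m \ge \#A - \#P + 1 \ge 4\,\#P$ suffices both for the paper's inequality $\deg(i) \ge m - 2\,\#P \ge m/2$ and for your own condition $m \ge 2 + 8k$. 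The genuine divergence is in the degree bound: the paper uses invariant (7) (all $A \times A$ products lie in $P$) together with injectivity of translations to bound the non-neighbours of a vertex by $2\,\#P \le m/2$, whereas you track the construction itself, observing that $Q_0$ contains only the diagonal $A \times A$ products and that each earlier step adds at most two non-diagonal $A \times A$ products with a prescribed left (or right) factor, giving at most $4k \le 2\,\#P - 6$ non-neighbours. Both counts close the argument; the paper's is shorter and uniform in $k$, while yours is self-contained in that it does not require invariant (7) to be carried through the induction.
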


\begin{proof}
According to Lemma~\ref{dirac}, it suffices to show that $m \ge 3$ and that the minimal
degree of vertices in $G_x$ is at least $m/2$. We keep in mind that both the right side
and the left side multiplication in $Q_k$ by a fixed element are injective maps.

Since $ax \in P \ssm \{1\}$ for every $a \in A$ such that $(a,x) \in D(Q_k)$, there are
at most $\#P - 1$ pairs $(a,x)$ in $D(Q_k)$. Hence
$$
m = \#L_x \ge \#A - \#P  + 1 \ge 4(\#P) > 3.
$$
Let $i$ be any vertex in $G_x$. Then $i$ is not adjacent to a vertex $j$ if and only
if at least one of the pairs $(a_i,b_j)$, $(a_j,b_i)$ belongs to $D(Q_k)$. However,
for fixed $a_i$ or $b_i$, all such products $a_ib$ or $ab_i$ belong to $P$ and, in
both cases, every element of $P$ occurs as a result at most once. Thus there are at
most $2(\#P)$ vertices in $G_x$ not adjacent to $i$. Therefore,
$$
\deg(i) \ge m - 2(\#P) \ge m - \frac{m}{2} = \frac{m}{2}.
$$

\vskip-12pt
\end{proof}

Let $\pi$ be a cyclic permutation of the set $V$ such that
$\bigl(1,\pi(1),\dots,\pi^{n-1}(1)\bigr)$ is a Hamiltonian cycle in $G_x$.
We define $\eta\co L_x \to R_x$ by
$$
\eta(a_i) = b_{\pi(i)}
$$
for any $i \in V$. Obviously, $\eta$ is bijective, $(a_i,\eta(a_i)) \in \Gama(Q_k)$
for each $i \in V$, and, since $m \ge 3$, it avoids any crossing.

It follows that in the extension $Q_{k+1}$ of the partial IP~loop $Q_k$ through the
operation $*$ all the gaps from the set $T_x$ are filled in, and the conditions
(6), (7), (8) are preserved. The last partial IP~loop $Q = Q_n$ satisfies already
all the requirements of Proposition~\ref{prop2}.
\end{proof}

\section{Steiner triples and the proof of Proposition~3}\label{5}

\noindent
In the proof of Proposition~\ref{prop3} we will make use of Steiner loops and Steiner
triple systems. A \textit{Steiner loop} is an IP~loop satisfying the identity $xx = 1$,
i.e., an IP~loop in which every element $x \ne 1$ has the order two. Steiner loops are
closely related to \textit{Steiner triple systems}, which are  systems $\St$ of three
element subsets of a given base set $X$ such that each two element subset $\{x,y\}$ of
$X$ is contained in exactly one set $\{x,y,z\} \in \St$. Namely, if $L$ is a Steiner
loop $L$ then $X = L \ssm \{1\}$ becomes a base set of the Steiner triple system
$$
\St_L = \bigl\{\{x,y,xy\}\co x,y \in X\bigr\}.
$$
Conversely, if $\St$ is a Steiner triple system with the base set $X$ then, adjoining
to $X$ a new element $1 \notin X$, we obtain a Steiner loop with the base set
$\X = X \cup \{1\}$, the unit 1 and the operation given by the casework
$$
xy = \begin{cases}
1, & \text{if $x = y$,} \\
z, & \text{where $\{x,y,z\} \in \St$, if $x \ne y$,}
\end{cases}
$$
for $x,y \in X$. Based on this definition, we will call a \textit{Steiner triple} any
three-element set $\{x,y,z\} \sbs O_2(P)$ in any partial IP~loop $P$, such that the
product of any two of its elements equals the third one.

It is well known that there exists a Steiner triple system $\St$ on an $n$-element set
$X$ if and only if $n \equiv 1$ or $n \equiv 3 \!\pmod 6$ (see, e.g., Hwang~\cite{HL}).

The construction reducing eventually the number of gaps in a given partial IP~loop $P$,
satisfying certain conditions which will be emerging gradually, is composed of several s
impler steps, we are going to describe, now. At the same time, it depends on a six term
progression $\ap = (a_0,a_1,a_2,a_3,a_4,a_5)$ of pairwise distinct order two elements of
$P$ chosen in advance; the criteria for its choice will be clarified later on.

The first step is the \textit{triplication construction}, which uses Steiner loops
heavily. Given an arbitrary finite partial IP~loop  $P$ such that
$\#P \equiv 2$ or $\#P \equiv 4$ (mod~6) we denote $n = \#P - 1$. Then Steiner triple
systems on $n$-element sets, as well as Steiner loops on $n+1$-element sets exist;
assume that $Y$, $Z$ are two $n$-element sets, such $P$, $Y$, $Z$ are pairwise disjoint,
and that both the sets $\Y = Y \cup \{1\}$, $\Z = Z \cup \{1\}$ are equipped with binary
operations turning them into Steiner loops. We denote by, in a fairly ambiguous way,
$$
3P = P \sqcup \Y \sqcup \Z
$$
the direct sum of the partial IP~loop $P$ with the Steiner loops $\Y$ and $\Z$ (see
Section~\ref{4}). It is a partial IP~loop with the base set $P \cup Y \cup Z$, consisting
of $3n + 1$ elements, and the domain
$$
D(3P) = D(P) \cup (Y \times Y) \cup (Z \times Z) \cup
       \bigl(\{1\} \times (Y \cup Z)\bigr) \cup \bigl((Y \cup Z) \times \{1\}\bigr).
$$
We will extend the partial operation on $3P$ by filling all the gaps consisting of
pairs of elements of different sets $P$, $Y$, $Z$. That way we'll obtain an extension
$3P^*$ of $3P$ with the same base set $P \cup Y \cup Z$, such that $\Gama(3P^*) = \Gama(P)$.
The extending operation $*$ is defined on the set
$$
T = (P_0 \times (Y \cup Z)) \cup ((Y \cup Z) \times P_0)
     \cup (Y \times Z) \cup (Z \times Y) \sbs \Gama\bigl(P \sqcup \Y \sqcup \Z\bigr),
$$
where $P_0 = P \ssm \{1\}$. It depends on some arbitrary fixed enumerations
$$
P_0 = \{x_0,\dots,x_{n-1}\}, \quad Y = \{y_0,\dots,y_{n-1}\}, \quad
  Z = \{z_0,\dots,z_{n-1}\}
$$
of the sets $P_0$, $Y$, $Z$, respectively. Once having them we put
$$
y_i*z_{i+k} = x_{i+2k}
$$
for $0 \le i, k < n$, with the addition of subscripts modulo $n$. Then, in order to
satisfy the conditions of Lemma~\ref{eqcond6}, we define
\begin{gather*}
x_{i+2k}*z_{i+k} = y_i,\quad y_i*x_{i+2k} = z_{i+k},\quad x_{i+2k}^{-1}*y_i=z_{i+k}, \\
z_{i+k}*x_{i+2k}^{-1}=y_i,\quad z_{i+k}*y_i = x_{i+2k}^{-1},
\end{gather*}
using the fact that all the elements of $Y$ and $Z$ are self-inverse. As all the pairs
$(x,y)$, $(y,x)$, $(x,z)$, $(z,x)$, $(y,z)$, $(z,y)$, where $x \in P_0$, $y \in Y$,
$z \in Z$, are gaps in $3P$, Lemma~\ref{itersimple} guarantees that the extension $3P^*$
of the partial IP~loop $3P$ through the operation $*$ is a partial IP~loop, again. For
lack of better terminology we will call it a \textit{Steiner triplication} of the partial
IP~loop $P$ and suppress the Steiner loops $\Y$, $\Z$ and the particular enumerations in
its notation.

The Steiner triplication $3P^*$ of $P$ satisfies $\Gama(3P^*) = \Gama(P)$, hence it still
has the same number of gaps as $P$. However, Proposition~\ref{prop3} requires us to decrease
this number. This will be achieved in a roundabout way. First we cancel some pairs in the
domain $D(3P^*)$, creating that way the potential to fill in more gaps than we have added.
In order to allow for this next step, $P$ has to satisfy some additional conditions, namely,
$\#P \ge 10$ (i.e., $n \ge 9$) and $o_2(P) \ge 6$. Though the enumerations of the sets $P_0$,
$Y$, $Z$, used in the definition of the extending operation $*$, could have been arbitrary,
we now assume that these sets were enumerated in such a way that the six term progression
\,$\ap = (a_0,a_1,a_2,a_3,a_4,a_5)$ chosen in advance coincides with the sixtuple
$(x_0,x_2,x_1,x_5,x_3,x_{n-3})$ and that $\{y_0,y_1,y_3\}$ is a Steiner triple in $\Y$.
This artificial trick will facilitate us the description of the next step of our construction.

Now, necessarily,
\,$z_0 = y_0x_0 = y_3x_{n-3}$, \,$z_1 = y_0x_2 = y_1x_1$ \,and \,$z_3 = y_1x_5 = y_3x_3$,
\,in other words, we have the following seven Steiner triples in $3P^*$:
\begin{gather*}
\{x_0, y_0, z_0\}, \{x_2, y_0, z_1\}, \{x_1, y_1, z_1\}, \\
\{x_5, y_1, z_3\}, \{x_3, y_3, z_3\}, \{x_{n-3}, y_3, z_0\}, \{y_0, y_1, y_3\}.
\end{gather*}
We delete these triples from the domain of $3P^*$. More precisely, for any one of these
three-element sets we delete from $D(3P^*)$ all the six pairs consisting of its distinct
elements. That way we obtain a partial IP~loop $3P^- \le 3P^*$, called the
\textit{reduction} of $3P^*$ which still is an extension of $P$, however, it has 42
more gaps than $P$ (6 for each Steiner triple).

Instead we introduce some new triples consisting of the same elements, namely
\begin{gather*}
\{x_0, x_2, y_0\}, \{x_2, x_1, z_1\}, \{x_1, x_5, y_1\}, \\
\{x_5, x_3, z_3\}, \{x_3, x_{n-3}, y_3\}, \{x_{n-3}, x_0, z_0\}, \\
\{y_0, y_1, z_1\}, \{y_1, y_3, z_3\}, \{y_3, y_0, z_0\},
\end{gather*}
which are intended to become Steiner triples, after we define a partial operation $\circ$
on the set $\{x_0, x_2, x_1, x_5, x_3, x_{n-3}, y_0, y_1, y_3, z_0, z_1, z_3\}$ by putting
the product of any pair of distinct elements of a given three-element set from this list
equal to the third one. That way we obtain an extending operation of the partial IP~loop
$3P^-$ if and only if all the pairs entering this new operation are gaps in $3P^-$.
This is obviously true for the 18 pairs arising from the three triples in the last row
above. However, this need not be the case for the pairs arising from the six triples
in the first two rows. The problem can be reduced to the question which of the pairs
$(x_0, x_2)$, $(x_2, x_1)$, $(x_1, x_5)$, $(x_5, x_3)$, $(x_3, x_{n-3})$,
$(x_{n-3}, x_0)$ belong to $\Gama(P)$. If, e.g., $(x_0,x_2) \notin \Gama(P)$ then we
cannot put $x_0 \circ x_2 = y_0$, so that $\{x_0, x_2, y_0\}$ cannot become a Steiner
triple.

Therefore, we include just those triples $\{x_i,x_j,y_k\}$ or $\{x_i,x_j,z_k\}$ for
which the pair $(x_i,x_j)$ is a gap in $P$. Every such a ``good'' triple results in
filling in six gaps. We already have 18~gaps filled in thanks to the last row. Thus we
need at least five ``good'' triples in the first two rows in order to fill in additional
30~gaps; this would give $18 + 30 = 48 > 42$ gaps, while having just four ``good''
triples results in refilling back 42~gaps, only. In general, we can fill in
$6(3 + g)$ gaps, where $0 \le g \le 6$ is the number of gaps $(x_i,x_j)$ in the list.

We refer to this last step of the construction as to ``filling in the gaps along the
path'' $\ap$ and denote the final resulting extension of the reduction $3P^-$ by
$3P\la\ap\ra$. Obviously, $3P\la\ap\ra$ is and extension of the original IP~loop $P$,
as well, having by $6(3+g) - 42 = 6(g - 4)$ less gaps than $P$. This number can be negative,
0 or positive, depending on whether $g < 4$, $g = 4$, or $g > 4$. That's why we are
interested just in the case when $g \ge 4$.

After all these preparatory accounts we can finally approach the proof of
Proposition~\ref{prop3}.

\begin{proof}[Proof of Proposition 3]
Let $P$ be a finite partial IP~loop satisfying the conditions
\medskip

\begin{enum}
\item[(4)]\quad
$3\,|\,o_3(P)$, \ $\#P \ge 10$, \ $\#P \equiv 4 \!\pmod 6$ \ and\,
\ $\Gama(P) \sbs O_2(P) \times O_2(P)$,
\end{enum}
\medskip

\noindent
such that $\Gama(P) \ne \emptyset$. We are to show that there is a finite partial
IP~loop $Q \ge P$ satisfying these conditions, as well, with less gaps than $P$.

Since $\Gama(P) \sbs O_2(P) \times O_2(P)$, it is an antireflexive and symmetric
relation on the set $O_2(P)$. Thus we can form the \textit{gap graph} $G(P) = (V,E)$
with the set of vertices
$$
V = \{x \in O_2(P)\co (x,y) \in \Gama(P)\ \text{for some}\ y \in O_2(P)\}
$$
and the set of edges
$$
E = \bigl\{\{x,y\}\co (x,y) \in \Gama(P)\bigr\}.
$$
From the definition of the set of vertices $V$ it follows there are no isolated vertices
in $G(P)$. Let's record some less obvious useful facts about this graph.

\begin{cl}\label{claim3}
\begin{enum}
\item[\rm (a)]
The degree of each vertex in $G(P)$ is even.
\item[\rm (b)]
The number of edges in $G(P)$ is divisible by three.
\end{enum}
\end{cl}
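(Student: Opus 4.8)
For part~(a), the plan is to fix a vertex $x \in V$ and study the left-multiplication map $L_x \co y \mapsto xy$. Since $x \in O_2(P)$ we have $x^{-1} = x$, so the IP~loop property~(3) guarantees that whenever $(x,y) \in D(P)$ we also have $(x,xy) \in D(P)$ and $x(xy) = y$; together with left cancellability this shows that $L_x$ is an involution mapping its domain $\{y \in P \co (x,y) \in D(P)\}$ bijectively onto itself. Because the hypothesis $\Gama(P) \sbs O_2(P) \times O_2(P)$ forces the complement of this domain to be exactly the neighborhood $N_x = \{y \co (x,y) \in \Gama(P)\} \sbs O_2(P)$, the domain of $L_x$ has $\#P - \deg(x)$ elements. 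I would then check that $L_x$ is fixpointfree: if $xy = y$, then setting $z = y$ in Lemma~\ref{eqcond6} gives $yy^{-1} = x$, i.e.\ $x = 1$, contradicting $x \in O_2(P)$. A fixpointfree involution lives on a set of even size, so $\#P - \deg(x)$ is even; since condition~(4) gives $\#P \equiv 4 \pmod 6$, in particular $\#P$ is even, we conclude that $\deg(x)$ is even.

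For part~(b), the plan is purely arithmetic. Every gap lies in $O_2(P) \times O_2(P)$ and has distinct coordinates (as $xx = 1$ for $x \in O_2(P)$), and by the symmetry of $\Gama(P)$ each undirected edge $\{x,y\}$ of $G(P)$ corresponds to exactly the two ordered gaps $(x,y)$ and $(y,x)$; hence $\#\Gama(P) = 2\,\#E$. I would then invoke Lemma~\ref{rad3}, namely $\#\Gama(P) \equiv (\#P-1)(\#P-2) - o_3(P) \pmod 6$, and reduce it modulo~$3$. Condition~(4) gives $\#P \equiv 1 \pmod 3$, so $(\#P-1)(\#P-2) \equiv 0 \pmod 3$, while $3 \mid o_3(P)$ annihilates the last term; therefore $2\,\#E \equiv 0 \pmod 3$. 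As $2$ is invertible modulo~$3$, this yields $\#E \equiv 0 \pmod 3$.

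The substantive point, and the only place where the loop structure rather than mere counting enters, is the fixpointfree involution argument in part~(a); once it is in place both halves of the claim follow at once. I expect the one thing that needs careful verification is that $L_x$ genuinely maps its domain into itself and is a bijection there, which is precisely what property~(3) combined with the hypothesis $\Gama(P) \sbs O_2(P) \times O_2(P)$ delivers.
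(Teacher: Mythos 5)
Your proposal is correct and follows essentially the same route as the paper: for (a) the paper likewise pairs each $y$ in the domain of left multiplication by $x$ with $xy$ (your fixpoint-free involution $L_x$, with the same observation that $xy=y$ would force $x=1$) and combines the resulting parity with the evenness of $\#P$; for (b) it likewise applies Lemma~\ref{rad3} together with $\#P\equiv 4\pmod 6$, $3\,|\,o_3(P)$ and $\#\Gama(P)=2\,\#E$. No gaps.
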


\begin{proof}
(a) Let $x \in O_2(P)$. Then the conditions $xy = z$ and $xz = y$ are equivalent for any
$y,z \in P$. Additionally, as $x \neq 1$, from $xy = z$ it follows that $y \ne z$. Thus
the elements $y \in P$ such that $(x,y)\in D(P)$ can be grouped into pairs, hence their
number is even. As $\#P$ is even, too, so is the degree
$$
\deg(x) = \#\{y \in O_2(P)\co (x,y) \in \Gama(P)\}
        = \#P - \#\{y \in P\co (x,y) \in D(P)\}.
$$

(b) By Lemma~\ref{rad3} we have
$$
\#\Gama(P)\equiv (\#P - 1)(\#P - 2) - o_3(P)\equiv 0 \!\pmod 6.
$$
On the other hand, $\#P\equiv 4\pmod 6$ and $3\,|\,o_3(P)$, yielding $3\,|\,\#\Gama(P)$.
Obviously, the number of edges in $G(P)$ is half of the number of gaps $\#\Gama(P)$,
hence the number of edges in $G(P)$ must be divisible by three.
\end{proof}

The structure of connected components in $G(P)$ obeys the following alternative.

\begin{cl}\label{claim4}
Let $C$ be a connected component of the graph $G(P)$. Then either $C$ contains a triangle
or a path of length five, or, otherwise, $C$ is isomorphic to one of the following graphs:
the cycle $C_4$ of length four, the cycle $C_5$ of length five or the complete bipartite
graph $K_{2,m}$ where $m \ge 4$ is even.
\end{cl}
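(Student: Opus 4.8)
The plan is to analyze $C$ through a longest path, exploiting that the two properties we may assume—$C$ is triangle-free and contains no path of length five—together with the evenness of all degrees (Claim~\ref{claim3}(a)) and the absence of isolated vertices, severely constrain its shape. Since every vertex has even degree at least $2$ and $C$ is finite and connected, $C$ contains a cycle; as $C$ is triangle-free this cycle has length at least $4$, so $C$ has a path of length at least $3$. On the other hand $C$ has no path of length $5$, so a longest path $v_0 v_1 \cdots v_\ell$ in $C$ has length $\ell \in \{3,4\}$. The endpoints $v_0,v_\ell$ have all their neighbours on the path (otherwise it could be prolonged), and triangle-freeness rules out $v_2$ and $v_{\ell-2}$ as neighbours; combined with the evenness of the endpoint degrees this pins down the endpoint neighbourhoods almost completely.

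When $\ell=3$ I would show that both endpoints have degree exactly $2$ and are adjacent to $v_1,v_2$ in the only triangle-free way, producing the $4$-cycle $v_0v_1v_2v_3$; any further vertex or edge would create a path of length $4$, so $C=C_4$. When $\ell=4$, the even-degree analysis of $v_0$ (whose admissible neighbours lie in $\{v_1,v_3,v_4\}$) leaves, after discarding the triangle-creating option $\{v_3,v_4\}$, exactly two subcases: $v_0\sim v_1,v_4$ or $v_0\sim v_1,v_3$. In the first subcase one obtains a $5$-cycle through $v_0$, and a short argument shows that any vertex attached outside this cycle would yield a path of length $5$; hence $C=C_5$. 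In the second subcase $v_0$ and $v_4$ turn out to be adjacent precisely to the pair $a=v_1$, $b=v_3$, which are non-adjacent and share the three common neighbours $v_0,v_2,v_4$.

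The heart of the proof is this last subcase, where I must upgrade the local picture ``$a,b$ have at least three common neighbours'' to the global statement $C=K_{2,m}$. The key step I would establish is that every neighbour of $a$ is also a neighbour of $b$ (and conversely), and that $C$ contains no vertices besides $a$, $b$ and their common neighbours: in each case, assuming the contrary and using two or three of the guaranteed common neighbours as ``middle'' vertices, one exhibits an explicit path of length $5$, with triangle-freeness keeping the path simple and forbidding edges inside the set of common neighbours. Writing $S=N(a)\cap N(b)$, this identifies $C$ as the complete bipartite graph with parts $\{a,b\}$ and the independent set $S$; finally, since $\deg(a)=\#S$ is even by Claim~\ref{claim3}(a) and $\#S\ge 3$, we get $\#S\ge 4$ even, i.e.\ $C\cong K_{2,m}$ with $m\ge 4$ even. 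I expect the main obstacle to be exactly this propagation step: organizing the length-five-path constructions so that they exhaust every way an extra vertex or a missing common-neighbour relation could arise, while tracking the parity constraints that eliminate the otherwise-plausible candidate $K_{2,3}$.
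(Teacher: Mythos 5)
Your proposal is correct and follows essentially the same route as the paper: both use the evenness of all degrees to produce a cycle, which triangle-freeness and the absence of a path of length five force to have length four or five, and both then show that any growth beyond a $4$-cycle is channelled into the $K_{2,m}$ pattern, with the same parity argument ruling out odd $m$ (your longest-path bookkeeping versus the paper's analysis of a fifth vertex attached to the $4$-cycle is only a cosmetic difference). One caution for the write-up of your propagation step: a neighbour $w$ of $a$ that is not a neighbour of $b$ cannot be refuted using only $w$, $a$, $b$ and the three common neighbours, since in that six-vertex induced subgraph $w$ has degree one and the longest path has length four; as the paper does at the corresponding point, you must first invoke the evenness of $\deg(w)$ to obtain a second neighbour $w'$ of $w$ lying outside the configuration, and only then exhibit the path $(w',w,a,v_0,b,v_2)$ of length five.
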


\begin{proof}
Let $C$ be any connected component in $G(P)$. As $G(P)$ has no isolated vertices and the degree
of every vertex in $C$ is even (and therefore at least two), there is a cycle in $C$. Assume that
$C$ contains no triangle and no path of length five. Then the length of this cycle must be
bigger than three and less than six. Therefore, there are just the following two options:
\smallskip

(a) There is a cycle of length five in $C$. Then there cannot be any other edge coming out
from its vertices since then there would be a path of length five contained in $C$. Thus $C$
coincides with this cycle.
\smallskip

(b) There is a cycle of length four in $C$; let us denote it by $(v_0,v_1,v_2,v_3)$. Then,
as $G(P)$ contains no triangle, neither $\{v_0,v_2\}$ nor $\{v_1,v_3\}$ is an edge in $G(P)$.
If there are no more vertices in $C$ then $C$ is a cycle of length four.

Otherwise, we can assume, without loss of generality, that there is a fifth vertex $u_0 \in C$
adjacent to $v_0$. As $u_0$ has an even degree, it must be adjacent to some other vertex,
too. If it were adjacent to some vertex $u_1$, distinct from all the vertices $v_0$, $v_1$,
$v_2$, $v_3$, there would be a path $(u_1,u_0,v_0,v_1,v_2,v_3)$ of length five in $C$. If $u_0$
were adjacent to $v_1$ or to $v_3$, there would be a triangle $(u_0,v_0,v_1)$ or $(u_0,v_0,v_3)$
in $C$. That means that $\{u_0,v_2\}$ must be an edge in $G(P)$ and $\deg(u_0) = 2$.

It follows that every other vertex in $C$ must have the degree two and it must be adjacent either
to $v_0$ and $v_2$ or to $v_1$ and $v_3$. However, the second option is impossible, since in that
case $(u_0,v_0,v_1,u_1,v_3,v_2)$ would be a path of length five. This means that $C$ is isomorphic
to the complete bipartite graph $K_{2,m}$, where one term of this partition is formed by the set
$\{v_0,v_2\}$ and the second one by the rest of the vertices in $C$. Since every vertex has an even
degree, $m$ must be even. At the same time, $m \ge 4$, as $K_{2,2}$ has just four vertices (and it
is isomorphic to the cycle $C_4$).
\end{proof}

Thus the proof of Proposition~\ref{prop3} will be complete once we show how to construct
the extension $Q$ in each of the cases listed in Claim~\ref{claim4}.
\smallskip

(a) \textit{$G(P)$ contains a triangle,} i.e., a three-element set of vertices $\{x,y,z\}$
such that all its two-element subsets are edges. Then we can extend $P$ through the operation
$*$ turning $\{x,y,z\}$ into a Steiner triple. The corresponding extension $Q$ of $P$ has all
the properties required and by six less gaps than $P$.
\smallskip

(b) \textit{$G(P)$ contains a path $\ap = (a_0,a_1,a_2,a_3,a_4,a_5)$ of length five.}
Then we can form the Steiner triplication $3P^*$ of $P$ and, filling in the gaps along
the path $\ap$ in its reduction $3P^-$, we obtain the final extension $Q = 3P\la\ap\ra$
satisfying the condition (4), again. If $(a_5,a_0)$ is a gap in $P$ (i.e., if $\ap$ is
a cycle of length five in $G(P)$) then $Q$ has twelve gaps less than $P$, otherwise it
still has by six gaps less than $P$.
\smallskip

We still have to prove Proposition~\ref{prop3} in the case there is neither any triangle nor
any path of length five in $G(P)$. To this end it is enough to construct, in each of the
remaining cases listed in Claim~\ref{claim4}, an extension $Q$ of $P$ such that the graph
$G(Q)$ has the same number of edges as $G(P)$, however, there is a path $\bp$ of length five
in $G(Q)$. From such a $Q$ we can construct another extension $3Q\la\bp\ra \ge Q \ge P$ with
a smaller number of gaps and still satisfying the condition (4), similarly as we did in the
case (b). So let us have a closer look at the remaining cases.
\smallskip

(c) \textit{$G(P)$ contains a connected component isomorphic to $K_{2,m}$, where $m \ge 4$.}
Let $\{u_0,u_1\}$ be the two-element term of the partition and $\{v_0,v_1,v_2,v_3\}$ be any
four-element subset of the second partition term. We denote by $\ap$ the six-term progression
$(v_0,u_0,v_1,v_2,u_2,v_3)$ and construct the extension $Q = 3P\la\ap\ra$ of the partial
IP~loop $P$ with the gap graph $G(Q)$. Then $\{v_0,u_0\}$, $\{u_0,v_1\}$, $\{v_2,u_2\}$,
and $\{u_2,v_3\}$ are edges in $G(P)$, while $\{v_1,v_2\}$ and $\{v_3,v_0\}$ are not. Hence
the new graph $G(Q)$ has the same number of edges as $G(P)$ and $Q$ has the same number of
gaps as $P$. At the same time, there are two distinct new vertices $y_1$, $z_3$ in $G(Q)$,
occurring in the enumerations of the sets $Y$, $Z$, respectively. Now, one can easily verify
that $\bp = (v_0,u_1,v_1,y_1,v_2,u_0)$ is a path of length five in $G(Q)$.
\smallskip

(d) \textit{There are two distinct connected components $C$ and $D$ in $G(P)$, each of
them isomorphic to the cycle $C_4$ or $C_5$.} Let $m$ and $l$ denote any of the numbers $4$
or $5$. We assume that $(u_0,u_1,\dots,u_{m-1})$ and $(v_0,v_1,\dots,v_{l-1})$ are the
cycles forming the components $C \cong C_m$ and $D \cong C_l$, respectively. Now we take the
six term progression $\ap = (u_0,u_1,u_2,v_0,v_1,v_2)$ and form the extension $Q = 3P\la\ap\ra$.
Once again, $\{u_0,u_1\}$, $\{u_1,u_2\}$, $\{v_0,v_1\}$ and $\{v_1,v_2\}$ are edges in $G(P)$,
while $\{u_2,v_0\}$ and $\{v_2,u_0\}$ are not. Hence $G(Q)$ has the same number of edges as
$G(P)$ and $Q$ has the same number of gaps as $P$. Now, picking the new distinct vertices
$y_1 \in Y$, $z_3 \in Z$, we obtain the path $\bp = (u_3,u_2,y_1,v_0,v_{l-1},v_{l-2})$ of
length five in $G(Q)$.
\smallskip

(e) \textit{$G(P)$ consists of a single connected component isomorphic either to $C_4$ or to $C_5$.}
However, this is impossible, since number of edges in $G(P)$ is divisible by three.
\smallskip

This concludes the proof of Proposition~\ref{prop3}, as well as of Theorems~\ref{thm1}
and~\ref{thm2}.

\end{proof}

\section{Final remarks}\label{6}

\noindent
The discussion from the introduction together with Theorem~\ref{thm2} naturally
lead to the following question.

\begin{prob}\label{prob1}
Is there some minimal (ore even the least) axiomatic class $\mathbf{K}$ of IP~loops
such that every group is locally embeddable into $\mathbf{K}_{\mathrm{fin}}$? Does
this class (if it exists) satisfy the Finite Embedding Property?
\end{prob}

The first candidate which should be examined in this connection seems to be the class
of all Moufang loops. One possible way how to define this concept reads as follows
(see Pflugfelder~\cite{Pflug}, Gagola~\cite{Gag}): A~\textit{Moufang loop} is a loop
satisfying the identity
$$
x(y(xz)) = ((xy)x)z
$$
It is well known that every Moufang loop is an IP~loop.

The following is not the usual definition of the concept of a sofic group
(see Ceccherini-Silberstein, Coornaert \cite{CSC}), however, as proved by
Gordon and Glebsky \cite{GG2}, it is equivalent to it. A group $(G,\cdot)$
is \textit{sofic} if for every finite set $X \sbs G$ and every $\eps > 0$
there exists a finite quasigroup $(Q,*)$ such that $X \sbs Q$,
$$
\frac{\#\{q \in Q\co 1 * q \ne q\}}{\# Q} < \eps,
$$
where $1$ denotes the unit element in $G$, and, for any $x,y \in X$ such that
$x \cdot y \in X$, we have $x \cdot y = x * y$, as well as
$$
\frac{\#\{q \in Q\co (x * y) * q \ne x * (y * q)\}}{\# Q} < \eps.
$$

Theorem~\ref{thm2} together with the above description of sofic groups indicate
that the sofic groups could perhaps be characterized as groups locally embeddable
into some ``nice'' subclass of the class of finite IP~loops, fulfilling some
``reasonable amount of associativity''. A natural candidate is the class of all
finite Moufang loops, once again. For some additional reasons in favor of this choice
see \cite{Gag}.

As already indicated, one should start with trying to clarify the following question.

\begin{prob}\label{prob2}
Does the class of all Moufang loops have the FEP?
\end{prob}

If the answer is negative then it would make sense to elaborate on the following
problem.

\begin{prob}\label{prob3}
Characterize those groups which are locally embeddable into finite Moufang loops.
\end{prob}

Finally, let us formulate two possible responses to Problem~\ref{prob3}, adding as
a comment that we find the first of them (which would follow from the affirmative
answer to Problem~\ref{prob2}) more probable to be true than the second one.

\begin{conj}\label{conj1}
Every group is locally embeddable into finite Moufang loops.
\end{conj}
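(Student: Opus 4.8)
The plan is to attack Conjecture~\ref{conj1} through the stronger Problem~\ref{prob2}: should the class of all Moufang loops have the FEP, the conjecture follows at once by the very argument through which Theorem~\ref{thm2} was derived from Theorem~\ref{thm1}. To this end I would first fix the right partial notion. Call $(P,\cdot)$ a \emph{partial Moufang loop} if it is a partial IP~loop in which the Moufang identity $x(y(xz)) = ((xy)x)z$ holds whenever both of its sides are defined. Given a group $G$ and a finite $X \sbs G$, the set $P = X \cup \{1\} \cup X^{-1}$ is then a finite partial Moufang loop inheriting the identity from $G$, exactly as in the reduction following Theorem~\ref{thm1}; so it would suffice to extend every finite partial Moufang loop to a finite Moufang loop. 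Note, however, that Conjecture~\ref{conj1} asks for less than Problem~\ref{prob2}: it only requires completing those finite partial Moufang loops that arise as cut-outs of groups, so the conjecture may well survive even if the full FEP fails.

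I would then try to reproduce the three-step template of Propositions~\ref{prop1}, \ref{prop2} and~\ref{prop3}: normalize the loop, move the remaining gaps into a controlled position, and then reduce their number batch by batch. The first genuinely new ingredient must be an analogue of Lemmas~\ref{eqcond6} and~\ref{itersimple} recording which products become \emph{forced} once a single relation $xy = z$ is declared, now under the combined IP and Moufang constraints. Here Moufang's theorem enters decisively: in any Moufang loop the subloop generated by two elements is a group, so committing to $xy = z$ commits us, in every Moufang completion, to $\la x,y\ra$ being associative, and hence to a whole web of further products being pinned down as more of them appear. The forced set is therefore no longer the tidy sixtuple of Lemma~\ref{eqcond6}, but a growing fragment of the group generated by $x$ and $y$ together with all of its interactions with the products already present in $P$.

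This cascade is exactly where I expect the main obstacle to lie, and it explains why the statement is only conjectural. In the IP~case, filling the gaps along a Steiner triple in Proposition~\ref{prop3} succeeded because three mutually multiplying order two elements impose \emph{no} relation beyond the triple itself, which gave the clean count of six gaps filled per good triple. In a Moufang loop three such elements already generate a group, forcing additional products and possibly reopening fresh gaps, so this bookkeeping collapses and there is no evident Steiner-type device compatible with the Moufang identities. Worse, diassociativity threatens to reintroduce, locally, precisely the LEF obstruction that the passage to IP~loops was designed to circumvent: any specified products among a \emph{single} pair of elements must already embed into a finite group. Deciding whether the resulting propagation can always be contained inside a \emph{finite} structure seems closely tied to a residual-finiteness-type phenomenon, which is why even the weaker Problem~\ref{prob2} remains open.

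A less systematic but perhaps more realistic route would be to abandon free gap-filling and instead draw the completing loops from a sufficiently rich catalogue of finite Moufang loops --- for instance the finite simple Moufang loops --- and to ask directly whether every finite partial table cut out from a group can be matched inside one of them. Whether such families are expressive enough to realize an arbitrary finite cut-out is precisely the open content of Conjecture~\ref{conj1}; settling it, even affirmatively for a restricted class of groups, would already be a meaningful advance beyond the FEP for IP~loops established here.
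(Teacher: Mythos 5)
This statement is Conjecture~\ref{conj1}, and the paper offers no proof of it: it is posed, together with Problem~\ref{prob2} and Conjecture~\ref{conj2}, as an open question in the final section. Your text, to its credit, does not pretend otherwise --- it is a research program rather than a proof, and as such it cannot be checked against a proof in the paper because none exists. The concrete gap is the one you yourself isolate: the entire machinery of Sections~\ref{3}--\ref{5} rests on Lemma~\ref{eqcond6}, which says that declaring a single product $xy=z$ in a partial IP~loop forces only a sixtuple of further relations, and on Lemma~\ref{itersimple}, which lets one fill whole families of gaps simultaneously provided the forced sets stay inside $\Gama(P)$. In the Moufang setting, diassociativity (Moufang's theorem) means that declaring $xy=z$ forces the entire group generated by $x$ and $y$, so the forced closure of a single relation is no longer finite and uniform in the same way, and the counting argument behind Proposition~\ref{prop3} (six gaps per good Steiner triple, $6(g-4)$ net gain along a path) has no analogue. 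You name this obstruction accurately, but naming an obstruction is not overcoming it; no step of your proposal actually constructs the required finite Moufang extensions, and your fallback suggestion of matching cut-outs inside finite simple Moufang loops is likewise only a restatement of what would need to be shown.

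One small caution on your framing: you say Conjecture~\ref{conj1} ``asks for less'' than Problem~\ref{prob2} because only group cut-outs need completing. That is true, but note that a group cut-out $P = X \cup \{1\} \cup X^{-1}$ is already associative wherever defined, so the partial structures arising here are quite special; any genuine progress would likely have to exploit that associativity rather than the bare partial-Moufang axioms you propose. As it stands, the proposal is a reasonable sketch of why the problem is open, not a proof of the statement.
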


\begin{conj}\label{conj2}
A group $G$ is sofic if and only if it is locally embeddable into finite Moufang loops.
\end{conj}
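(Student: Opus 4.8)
The plan is to prove the two implications of the biconditional separately, by genuinely different means. Consider first the easier-looking direction, that local embeddability into finite Moufang loops implies soficity. Given a finite $X \sbs G$ and an $\eps > 0$, local embeddability supplies a finite Moufang loop $(Q,*)$ with $X \sbs Q$ and $x \cdot y = x * y$ whenever $x \cdot y \in X$. Because a Moufang loop is a loop, the unit acts exactly and the quasigroup property holds on the nose, so the count $\#\{q\co 1 * q \ne q\}$ vanishes; the only clause of the soficity definition still to be checked is the approximate associativity $\#\{q\co (x*y)*q \ne x*(y*q)\} < \eps\,\#Q$ for $x,y \in X$. Writing $L_a$ for the left translation $q \mapsto a*q$, this says that $L_{x*y}$ and $L_xL_y$ should agree on all but an $\eps$-fraction of $Q$. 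I would attack this by passing to an ultraproduct $\prod Q_i/\,\mathcal{U}$ of the approximating loops, into which $G$ embeds as a group, and trying to show that for fixed $x,y \in G$ the internal non-associating set $\{q\co (xy)q \ne x(yq)\}$ has vanishing Loeb measure, using diassociativity and the structure of the associator subloop.

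For the reverse implication I would try to imitate the architecture of the proof of Theorem~\ref{thm1}, working inside the variety defined by the Moufang identity $x(y(xz)) = ((xy)x)z$ instead of merely the inverse property. The exact cutout of $X$ is a partial Moufang loop, since groups are Moufang, and the goal is to complete it to a finite Moufang loop by an analogue of the gap-filling scheme of Propositions~\ref{prop1}--\ref{prop3}. Here soficity would enter not merely to assert that the cutout embeds, but to supply a finite quasigroup approximation whose small associativity defect guides and bounds the completion, so that every repaired product can be made to satisfy the Moufang identity.

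The hard part will be this reverse implication, and it is where the programme stalls at present. The repair devices that drive Theorem~\ref{thm1} --- the Steiner triplications and the filling of gaps along a path through Hamiltonian cycles in the gap graph --- are tailored to preserve only the inverse property, and they destroy associativity wholesale; forcing every newly defined product to respect the Moufang identity is precisely the unresolved question of whether the class of Moufang loops has the Finite Embedding Property, i.e.\ Problem~\ref{prob2}. There is moreover a structural constraint that any correct argument must honour: a completion procedure that worked unconditionally would prove Conjecture~\ref{conj1} and, combined with the first implication, would force \emph{every} group to be sofic. Soficity must therefore enter the completion in an essential way, so that the construction provably fails for a hypothetical non-sofic group --- and this, rather than any routine verification, is the crux of Conjecture~\ref{conj2}.
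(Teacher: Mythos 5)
This statement is Conjecture~\ref{conj2}: the paper offers no proof of it, and neither does your proposal --- by your own admission the programme ``stalls'' at the decisive point, so what you have written is a research plan with an honest assessment of its obstacles, not an argument that can be checked. Concretely, both implications remain open. In the forward direction, the claim that only approximate associativity ``is still to be checked'' already skips a step: local embeddability gives $1*x=x$ only for $x$ in the finite window $X$, not for all $q\in Q$, so the unit clause of the soficity definition is not automatic unless you first arrange that the unit of $G$ is sent to the unit of the loop. More seriously, the approximate associativity itself is unjustified: diassociativity (Moufang's theorem) controls only subloops generated by \emph{two} elements, whereas the non-associating set $\{q\co (x*y)*q\ne x*(y*q)\}$ ranges over a third, arbitrary element, and there is no reason offered why its Loeb measure in the ultraproduct should vanish. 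Nothing in the structure of finite Moufang loops is invoked that would force generic triples to associate approximately.

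In the reverse direction you correctly identify that the gap-filling machinery of Propositions~\ref{prop1}--\ref{prop3} preserves only the inverse property and would have to be rebuilt to respect the identity $x(y(xz))=((xy)x)z$, which is exactly the unresolved Problem~\ref{prob2}; and you correctly observe the logical constraint that soficity must enter the construction essentially, since an unconditional completion (together with the forward implication) would make every group sofic. These observations are accurate and match the paper's own framing of Conjectures~\ref{conj1} and~\ref{conj2} as open problems, but they do not constitute progress toward a proof: no new lemma is proved, no reduction is carried out, and the central difficulty is named rather than addressed. If you wish to work on this, the forward implication is the more tractable target, and even there you would need a quantitative statement about associators in finite Moufang loops before the ultraproduct argument can get off the ground.
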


\noindent
\textbf{Acknowledgement.}
The second author acknowledges with thanks the support by the grant
no.~1/0333/17 of the Slovak grant agency VEGA.

\end{document}